\setlist{  
  listparindent=\parindent,
  parsep=0pt,
}
\theoremstyle{plain}
\newtheorem{thm}{Theorem}[section]
\newtheorem{prop}[thm]{Proposition}
\newtheorem{lemma}[thm]{Lemma}
\theoremstyle{definition}
\newtheorem{remark}[thm]{Remark}
\Crefname{thm}{Theorem}{Theorems}
\Crefname{prop}{Proposition}{Propositions}
\numberwithin{equation}{section} 
\DeclarePairedDelimiter{\paren}{\lparen}{\rparen}
\newcommand{\M}{{\mathcal{M}}}
\newcommand{\p}{{\partial}}
\renewcommand{\d}{\delta}
\newcommand{\R}{{\mathbb{R}}}
\newcommand{\C}{{\mathbb{C}}}
\newcommand{\N}{{\mathbb{N}}}
\newcommand{\Z}{{\mathbb{Z}}}
\newcommand{\T}{{\mathbb{T}}}
\newcommand{\g}{{\mathsf{g}}}
\newcommand{\G}{{\mathsf{G}}}
\newcommand{\Sc}{{\mathcal{S}}}
\renewcommand{\M}{{\mathbb{M}}}
\newcommand{\I}{\mathbb{I}}
\newcommand{\ga}{\gamma}
\newcommand{\ph}{\phantom{=}}
\newcommand{\nn}{\nonumber}
\newcommand{\ep}{\epsilon}
\newcommand{\vep}{\varepsilon}
\newcommand{\al}{\alpha}
\newcommand{\be}{\beta}
\newcommand{\ka}{\kappa}
\newcommand{\la}{\lambda}
\newcommand{\Tc}{\mathcal{T}}
\newcommand{\indic}{\mathbf{1}}
\newcommand{\F}{{\mathcal{F}}}
\newcommand{\Dm}{|\nabla|}
\renewcommand{\P}{\mathcal{P}}
\let\div\relax
\DeclareMathOperator{\div}{\mathrm{div}}
\def\XXint#1#2#3{{\setbox0=\hbox{$#1{#2#3}{\int}$ }
\vcenter{\hbox{$#2#3$ }}\kern-.6\wd0}}
\let\oldtocsection=\tocsection
\let\oldtocsubsection=\tocsubsection
\let\oldtocsubsubsection=\tocsubsubsection
\renewcommand{\tocsection}[2]{\hspace{0em}\oldtocsection{#1}{#2}}
\renewcommand{\tocsubsection}[2]{\hspace{1em}\oldtocsubsection{#1}{#2}}
\renewcommand{\tocsubsubsection}[2]{\hspace{2em}\oldtocsubsubsection{#1}{#2}}
\title[Trend to equilibrium for flows with random diffusion]{Trend to equilibrium for flows with random diffusion}
\author[S. Aryan]{Shrey Aryan}
\email{shrey183@mit.edu}
\author[M. Rosenzweig]{Matthew Rosenzweig}
\email{mrosenzw@mit.edu}
\thanks{M.R. is supported in part by the NSF through grants DMS-2052651, DMS-2206085 and by the Simons Foundation through the Simons Collaboration on Wave Turbulence.}
\author[G. Staffilani]{Gigliola Staffilani}
\email{gigliola@math.mit.edu}
\thanks{G.S. is supported in part by the NSF through grant DMS-2052651 and by the Simons Foundation through the Simons Collaboration on Wave Turbulence.}
\begin{document}
\begin{abstract}
Motivated by the possibility of noise to cure equations of finite-time blowup, recent work \cite{RS2023} by the second and third named authors showed that with quantifiable high probability, \emph{random diffusion} restores global existence for a large class of active scalar equations in arbitrary dimension with possibly singular velocity fields. This class includes Hamiltonian flows, such as the SQG equation and its generalizations, and gradient flows, such as the Patlak-Keller-Segel equation. A question left open is the asymptotic behavior of the solutions, in particular, whether they converge to a steady state. We answer this question by showing that the solutions from \cite{RS2023} in the periodic setting converge in Gevrey norm exponentially fast to the uniform distribution as time $t\rightarrow\infty$. 
\end{abstract}
\maketitle

\section{Introduction}
Taking inspiration from \cite{GhV2014, BNSW2020}, recent work \cite{RS2023} by the second and third named authors showed for a large class of scalar flows that the addition of a \emph{random diffusion} to the dynamics leads to global classical solutions with high probability. Such an effect is significant, as without noise, the class considered includes equations, such as aggregation equations, for which finite-time blowup holds for classical solutions, as well equations such as the inviscid SQG equation, for which global existence of classical solutions is unknown. We refer to the introduction of \cite{RS2023} for a detailed discussion of the physical relevance and mathematical history of the class of equations considered.

A question left open in the cited work is the asymptotic behavior of solutions as $t\rightarrow\infty$. The purpose of this note is to answer this question by showing that with high probability, solutions converge to the uniform distribution with mass equal to that of the initial data. One may interpret this as ``equilibriation'' of the system. As the uniform distribution is a stationary solution, in particular, this implies that it is the unique equilibrium. The present work together with the previous works \cite{GhV2014, BNSW2020, RS2023}, demonstrate a fairly complete global theory for the effect of random damping/diffusion. 

\subsection{The model}\label{ssec:introM}
The stochastic partial differential equation (SPDE) we consider is
\begin{equation}\label{eq:spde}
\begin{cases}
\p_t\theta +\div(\theta\M\nabla\g\ast\theta)  = \nu\Dm^s\theta\dot{W} \\
\theta|_{t=0} = \theta^0
\end{cases}
\qquad (t,x)\in\R_+\times\T^d.
\end{equation}
Above, $\M$ is a $d\times d$ constant matrix with real entries and $\g\in\Sc'(\T^d)$ is a tempered distribution, such that there is a $\ga >0$ so that the Fourier transform $\hat{\g}(k)$ satisfies the bound
\begin{align}\label{eq:gassmp}
\forall k\in \Z^d, \qquad |\hat{\g}(k)| \lesssim |k|^{-\ga}.
\end{align}

The \emph{random diffusion} corresponds to the term in the right-hand side of \eqref{eq:spde}, where $\nu>0$, $\Dm^s$ is the fractional Laplacian of order $s$ (i.e., the Fourier multiplier with symbol $|k|^s$), and $W$ is a one-dimensional standard Brownian motion. The randomness stems from the fact that the diffusivity coefficient $\nu$ is modulated by the white noise $\dot{W}$. The addition of such a term was first proposed by Buckmaster et al. \cite{BNSW2020} to obtain global existence in the case of $\M, \g$ corresponding to the inviscid SQG equation, following an earlier random damping term proposed by Glatt-Holtz and Vicol \cite{GhV2014} in the case of $\M,\g$ corresponding to the $d=2$ incompressible Euler vorticity equation. In \cite{RS2023}, an inhomogeneous diffusion $\nu(1+\Dm^s)\theta\dot{W}$ was instead used because the problem was set on $\R^d$, which entails issues at low frequencies (see \cref{ssec:imtroCP} for further elaboration).

The mathematical interpretation of the SPDE \eqref{eq:spde} is based on a pathwise change of unknown. Supposing we have a solution $\theta$ to \eqref{eq:spde} and formally setting $\mu^t\coloneqq \Gamma^t\theta^t$, where for each realization of the Brownian motion $W$, $\Gamma^t\coloneqq e^{-\nu W^t\Dm^s}$ is the Fourier multiplier with symbol $e^{-\nu W^t|k|^s}$, It\^o's lemma implies
\begin{align}
\p_t\mu &= - \div\Gamma\paren*{\Gamma^{-1}\mu\M\nabla\g\ast\Gamma^{-1}\mu}- \frac{\nu^2}{2}\Dm^{2s}\mu. \label{eq:rpde}
\end{align}
See \cite[Section 2]{BNSW2020} or \cite[Equation (1.7)]{RS2023} for details of the computation and \cite[Remark 1.1]{RS2023} for an explanation of the choice of It\^o noise, as opposed to Stratonovich noise. Equation \eqref{eq:rpde} is a \emph{random PDE} that may be interpreted pathwise: for a fixed realization of $W$, which almost surely is a locally continuous path on $[0,\infty)$, one studies the Cauchy problem for \eqref{eq:rpde}. 

\subsection{Main results}\label{ssec:introMR}
To state our results, we first fix some notions. Here and throughout this paper, we assume that the potential $\g$ satisfies the condition \eqref{eq:gassmp}. We assume that we have a standard real Brownian motion $\{W^t\}_{t\geq 0}$ defined on a filtered probability space $(\Omega, \F, \{\F^t\}_{t\geq 0},\P)$ satisfying all the usual assumptions. For $\al,\be,\nu>0$, we define the event
\begin{equation}\label{eq:Omsetdef}
\Omega_{\al,\be,\nu} \coloneqq \{\omega\in\Omega : \al+\beta t - \nu W^t(\omega) \geq 0 \quad \forall t\in [0,\infty)\} \subset\Omega.
\end{equation}
It is known that $\P(\Omega_{\al,\be,\nu}) = 1 - e^{-\frac{2\al\be}{\nu^2}}$ \cite[Proposition 6.8.1]{Resnick1992}. The definition of the Fourier-Lebesgue space $\hat{W}^{\ka,r}$ and norm $\|\cdot\|_{\hat{W}^{\ka,r}}$ used below may be found in \cref{ssec:introNot}. 

\begin{thm}\label{thm:main}
Let $d\geq 1$, $\ga>0$, $\max(\frac{1}{2},\frac{2-\ga}{2})<s\leq 1$. Suppose that $\g$ satisfies \eqref{eq:gassmp} for $\ga$. Given $\al,\be,\nu>0$, set $\phi^t\coloneqq \al+\beta t$ and
\begin{align}\label{eq:zetadef}
\zeta \coloneqq \inf_{k\in \Z^d : k\neq 0}\left(\frac{\nu^2}{2}-\beta|k|^{-s}  - \Re\left(\hat{\g}(k)|k|^{-2s}(k\cdot \M  k)\right)\right).
\end{align}
Assume that $\zeta>0$.

If $s$ is sufficiently large depending on $\ga$, then there exists an $r_0\geq 1$ depending on $d,\ga,s$, such that the following holds. For any $1\leq r \leq r_0$ and any $\sigma>0$ sufficiently large depending on $d,\ga,r,s$, there is a constant $C>0$ depending only on $d,\ga,r,s,\sigma$, such that for initial data $\mu^0$ satisfying $\frac{1}{(2\pi)^d}\int_{\T^d}\mu^0dx=1$ and
\begin{equation}\label{eq:idcon}
\left\|e^{(\al+\ep)\Dm^{s}}\mu^0-1\right\|_{\hat{W}^{\sigma s,r}} < \frac{\zeta}{C|\M|},
\end{equation}
for $\ep>0$, and any path in $\Omega_{\al,\be,\nu}$, there exists a unique global solution $\mu\in C^0([0,\infty); \hat{W}^{\sigma s,r})$ to equation \eqref{eq:rpde} with initial datum $\mu^0$. Moreover,
\begin{align}
\forall t\geq 0, \qquad   \left\|e^{(\phi^t+\ep)\Dm^s}\mu^t-1\right\|_{\hat{W}^{\sigma s,r}} \leq e^{-\frac{\zeta t}{2}} \left\|e^{(\al+\ep)\Dm^s}\mu^0-1\right\|_{\hat{W}^{\sigma s,r}}.
\end{align}
\end{thm}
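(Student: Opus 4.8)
\emph{Proof sketch.} The strategy is to rewrite \eqref{eq:rpde} as a semilinear parabolic equation for the Gevrey‑weighted fluctuation of $\mu$ about the constant $1$, and to recognize that the number $\zeta$ in \eqref{eq:zetadef} is precisely a spectral‑gap constant for the linearization; throughout we write $\hat f(k)$ for Fourier coefficients and recall that $\|f\|_{\hat{W}^{\ka,r}}$ is (equivalent to) a weighted $\ell^r$‑norm of $\langle k\rangle^{\ka}\hat f(k)$.

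\emph{Step 1 (reduction).} Since $\Dm^s$ annihilates constants, $e^{(\phi^t+\ep)\Dm^s}1=1$, so the quantity to be controlled is $u^t\coloneqq e^{(\phi^t+\ep)\Dm^s}(\mu^t-1)$. Put $\lambda^t\coloneqq\phi^t+\ep$ and $\tau^t\coloneqq\lambda^t-\nu W^t$, so that on a path in $\Omega_{\al,\be,\nu}$ one has $\tau^t\ge\ep>0$ for all $t$. Integrating \eqref{eq:rpde} shows the mass $\hat\mu^t(0)$ is conserved, hence $\equiv1$, so $\hat u^t(0)\equiv0$ and all Fourier sums below run over $k\ne0$. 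Substituting $\mu=1+e^{-\lambda^t\Dm^s}u$ into \eqref{eq:rpde}, using $\Dm^{2s}1=0$ and $\nabla(\g\ast1)=0$, and conjugating by $e^{\lambda^t\Dm^s}$, one obtains $\p_t u=-\L u+\mathcal{Q}[u]$ with $\widehat{\L u}(k)=m_k\hat u(k)$, $m_k\coloneqq\tfrac{\nu^2}{2}|k|^{2s}-\be|k|^s-(k\cdot\M k)\hat\g(k)$, and $\mathcal{Q}[u]=-e^{\lambda^t\Dm^s}\div\,\Gamma\big[(e^{-\tau^t\Dm^s}u)\,\M\nabla\g\ast e^{-\tau^t\Dm^s}u\big]$. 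Two facts are used repeatedly. First, by the very definition \eqref{eq:zetadef}, $\Re m_k\ge\zeta|k|^{2s}\ge\zeta$ for every $k\ne0$. Second, in Fourier variables $\widehat{\mathcal{Q}[u]}(k)=\sum_{l+j=k}e^{-\tau^t(|l|^s+|j|^s-|k|^s)}(k\cdot\M j)\hat\g(j)\hat u(l)\hat u(j)$, and since $x\mapsto x^s$ is subadditive for $0<s\le1$ we have $|l|^s+|j|^s-|k|^s\ge0$; hence on $\Omega_{\al,\be,\nu}$ all exponential factors are $\le1$, which together with \eqref{eq:gassmp} and $|k\cdot\M j|\le|\M|\,|k|\,|j|$ (the $j=0$ term vanishing) gives the pointwise bound $|\widehat{\mathcal{Q}[u]}(k)|\le C|\M|\sum_{l+j=k}|k|\,|j|^{1-\ga}\,|\hat u(l)|\,|\hat u(j)|$.

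\emph{Step 2 (differential inequality).} Let $Y(t)\coloneqq\|u^t\|_{\hat{W}^{\sigma s,r}}^r=\sum_k\langle k\rangle^{\sigma sr}|\hat u^t(k)|^r$ and $Z(t)\coloneqq\sum_k\langle k\rangle^{\sigma sr}|k|^{2s}|\hat u^t(k)|^r$, so $Z\ge Y$ (as $|k|\ge1$) and $Z\asymp\|u^t\|_{\hat{W}^{\sigma s+2s/r,r}}^r$. Differentiating $Y$ along $\p_t u=-\L u+\mathcal{Q}[u]$ and using $\Re m_k\ge\zeta|k|^{2s}$ gives $\tfrac{d}{dt}Y\le-r\zeta Z+r\sum_k\langle k\rangle^{\sigma sr}|\hat u(k)|^{r-1}|\widehat{\mathcal{Q}[u]}(k)|$. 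The crux is the bilinear estimate $\sum_k\langle k\rangle^{\sigma sr}|\hat u(k)|^{r-1}|\widehat{\mathcal{Q}[u]}(k)|\le C_1|\M|\,\|u\|_{\hat{W}^{\sigma s,r}}\,Z$. To prove it one inserts the pointwise bound of Step 1, splits the output weight and the derivative $|k|$ from $\div$ as $\langle k\rangle^{\sigma sr}|k|=\big(\langle k\rangle^{\sigma s}|k|^{2s/r}\big)^{r-1}\langle k\rangle^{\sigma s}|k|^{\delta}$ with $\delta\coloneqq1-2s+2s/r$, applies H\"older in $k$ with exponents $\tfrac{r}{r-1}$ and $r$ so that the first group together with the $r-1$ factors $\hat u(k)$ is absorbed into $Z^{(r-1)/r}$, and estimates the remaining factor --- a weighted norm of the product $(\Dm^{1-\ga}\tilde u)\,\tilde u$, where $\tilde u$ has Fourier modulus $|\hat u|$ --- by the asymmetric convolution inequality $\|fg\|_{\hat{W}^{\rho,r}}\lesssim\|f\|_{\hat{W}^{\rho,r}}\|\hat g\|_{\ell^1}+\|\hat f\|_{\ell^1}\|g\|_{\hat{W}^{\rho,r}}$ and the embedding $\|\hat v\|_{\ell^1}\lesssim\|v\|_{\hat{W}^{\sigma s,r}}$ (valid once $\sigma$ is large). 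This puts one factor at the base regularity $\hat{W}^{\sigma s,r}$ and the other at regularity $\sigma s+\delta$ or $\sigma s+\delta+(1-\ga)_+$, both $\le\sigma s+2s/r$ exactly when $s\ge\tfrac12$ and $s\ge\tfrac{2-\ga}{2}$ --- so the order‑$2s$ diffusion in $\L$ supplies the derivatives needed to pay for $\div$ and for $\nabla\g$ --- while $r\le r_0$ guarantees $\delta\ge0$ and the $\ell^1$‑summability. Hence $\tfrac{d}{dt}Y\le-r\zeta Z+rC_1|\M|\,Y^{1/r}Z$.

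\emph{Step 3 (global existence, decay, and the main obstacle).} From the last inequality, whenever $\|u^t\|_{\hat{W}^{\sigma s,r}}=Y(t)^{1/r}\le\tfrac{\zeta}{2C_1|\M|}$ one has $\tfrac{d}{dt}Y\le-\tfrac{r\zeta}{2}Z\le-\tfrac{r\zeta}{2}Y$. Local well‑posedness of $\p_t u=-\L u+\mathcal{Q}[u]$ in $\hat{W}^{\sigma s,r}$ --- a contraction‑mapping argument using the parabolic smoothing $\|e^{-t\L}\|_{\hat{W}^{\rho,r}\to\hat{W}^{\rho+\theta,r}}\lesssim(\zeta t)^{-\theta/2s}e^{-\zeta t/2}$ (with $\theta/2s<1$ since $s>\tfrac12$) and the bilinear estimate of Step 2, as in \cite{RS2023} --- gives a unique maximal solution with datum $u^0=e^{(\al+\ep)\Dm^s}(\mu^0-1)$. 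Taking $C=2C_1$ in \eqref{eq:idcon}, the smallness hypothesis reads $\|u^0\|_{\hat{W}^{\sigma s,r}}<\tfrac{\zeta}{2C_1|\M|}$, so a continuity/bootstrap argument shows $t\mapsto\|u^t\|_{\hat{W}^{\sigma s,r}}$ is nonincreasing and stays under the threshold on the whole interval of existence; this precludes blow‑up, so the solution is global, and $Y(t)\le e^{-r\zeta t/2}Y(0)$ is exactly the assertion $\|e^{(\phi^t+\ep)\Dm^s}\mu^t-1\|_{\hat{W}^{\sigma s,r}}\le e^{-\zeta t/2}\|e^{(\al+\ep)\Dm^s}\mu^0-1\|_{\hat{W}^{\sigma s,r}}$. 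The principal difficulty is the bilinear estimate of Step 2: one must simultaneously recover, via the order‑$2s$ diffusion, the full derivative lost to $\div$ and the $(1-\ga)$ derivatives from $\nabla\g$ (whence $s>\max(\tfrac12,\tfrac{2-\ga}{2})$, $r\le r_0$, and the need for $\sigma$ --- hence the admissible range of $s$ --- to be large enough that the Fourier–Lebesgue summability holds) \emph{and} arrange that precisely one nonlinear factor sits at the base regularity, so the inequality self‑improves as $\tfrac{d}{dt}Y\lesssim Y\big(-\zeta+C|\M|\,\|u\|_{\hat{W}^{\sigma s,r}}\big)$ and the radius $\zeta/(C|\M|)$ in \eqref{eq:idcon} genuinely closes the bootstrap; a lesser point is that the growth of the Gevrey radius $\phi^t+\ep$ is controlled pathwise precisely by $\Omega_{\al,\be,\nu}$ and the extra $\ep$, which keep $\tau^t\ge\ep$ so the weights in $\widehat{\mathcal{Q}[u]}(k)$ never amplify.
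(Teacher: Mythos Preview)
Your overall strategy --- reduce to the fluctuation $\varrho=\mu-1$, derive a differential inequality of the form $\frac{d}{dt}Y\le(C|\M|Y^{1/r}-\zeta)Z$ with $Y=\|\varrho\|_{\G_{\phi+\ep}^{\sigma,r}}^r$ and $Z$ a higher‑order dissipative norm, and close via a bootstrap --- matches the paper's. The bilinear estimate you sketch is essentially the paper's Lemma~3.3. But there is a genuine gap in how you pass from the formal differential inequality to a rigorous global result.

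The inequality in your Step~2 involves $Z(t)\asymp\|u^t\|_{\hat W^{(\sigma+2/r)s,r}}^r$, which requires the solution to live in a \emph{strictly smaller} space than the one in which you do local well‑posedness. You never explain why $Z(t)<\infty$; without this the computation is only formal, and the bootstrap in Step~3 cannot be started. The paper confronts exactly this mismatch: its Proposition~3.1 \emph{assumes} $\varrho\in C_T^0\G_\phi^{\ka+2/r,r}$ in order to conclude decay of the weaker $\G_\phi^{\ka,r}$ norm, and it resolves the gap by a mechanism different from anything in your sketch. Local well‑posedness (Proposition~2.1) is run at a \emph{low} index $\sigma<(2s-1)/s$ --- this constraint is what makes the smoothing exponent integrable --- and the extra analyticity margin $\ep$ is then \emph{traded} for Sobolev regularity via the Gevrey embedding $\|\cdot\|_{\G_a^{\ka',r}}\lesssim_{a'-a}\|\cdot\|_{\G_{a'}^{\ka,r}}$ (Lemma~2.3), upgrading the solution from $\G_{\phi+\ep'}^{\sigma,r}$ to $\G_{\phi+\ep''}^{\sigma_0+2/r,r}$ for $\ep''<\ep'$. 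Globality comes from a lifespan‑comparison lemma (Lemma~3.4): each decrement $\ep_1\to\ep_2$ extends the maximal time by a \emph{fixed} amount independent of $\ep_1-\ep_2$, so telescoping over a partition of $[\ep',\ep]$ forces $T_{\max,\sigma,\ep'}=\infty$; one then applies the monotonicity at radius $\phi+\ep'$ and finally monotone convergence to recover $\ep$ itself.

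Your stated role for $\ep$ --- ``keep $\tau^t\ge\ep$ so the weights in $\widehat{\mathcal Q[u]}(k)$ never amplify'' --- is not the point: on $\Omega_{\al,\be,\nu}$ one already has $\phi^t-\nu W^t\ge0$ without the $\ep$, and that alone bounds the exponential factors by $1$. The $\ep$ is there precisely to fund the regularity upgrade that makes your Step~2 rigorous. If you want instead to avoid the $\ep$‑trick and close by parabolic smoothing (arguing $u^t\in\hat W^{(\sigma+2/r)s,r}$ for $t>0$ even though $u^0$ is only in $\hat W^{\sigma s,r}$), that is a separate instantaneous‑regularization statement you would have to prove; note also that your contraction argument with integrable exponent $\theta/2s<1$ is the paper's low‑index LWP, which forces $\sigma s<2s-1$ and so does not directly give existence at the large $\sigma$ demanded by the theorem.
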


\begin{remark}\label{rem:size}
To make the statement of \cref{thm:main} reader-friendly, we have opted not to include the explicit relations between parameters, such as $d,\ga,s,r_0,\sigma$. These relations are explicitly worked out in \Cref{sec:LWP,sec:Equi}. Throughout the paper, the reader should keep in mind that the most favorable choice is $(s,r) = (1,1)$.
\end{remark}

\begin{remark}\label{rem:zeta}
The condition $s>\frac{2-\ga}{2}$ ensures that we can make $\zeta>0$ by fixing $\beta,\g,\M$ and then taking $\nu$ sufficiently large. 
\end{remark}

\begin{remark}\label{rem:rescale}
By rescaling time and using conservation of mass (see \cref{rem:mcon} below), we may always reduce to the case $\frac{1}{(2\pi)^d}\int_{\T^d}\mu^0 dx=1$ up to a change of $\nu$. More precisely, suppose that $\mu$ is a solution to \eqref{eq:rpde}. Letting $m = \frac{1}{(2\pi)^d}\int_{\T^d}\mu^0 dx$, set $\mu_m^t \coloneqq \frac{1}{m}\mu^{t/m}$. Then using the chain rule,
\begin{align}
\p_t\mu_m^t &= -\frac{1}{m^2}\Gamma^{\frac{t}{m}}\div\paren*{\left(\Gamma^{\frac{t}{m}}\right)^{-1}\mu^{t/m}\M\nabla\g\ast\left(\Gamma^{\frac{t}{m}}\right)^{-1}\mu^{t/m} } -\frac{\nu^2}{2m^2}\Dm^{2s}\mu^{t/m} \nn\\
&= -\Gamma_m^t\div\paren*{\left(\Gamma_m^t\right)^{-1}\mu_m^t\nabla\g\ast\left(\Gamma_m^t\right)^{-1}\mu_m^t} - \frac{\nu_m^2}{2} \Dm^{2s}\mu_m^t,
\end{align}
where $\nu_m \coloneqq \nu/\sqrt{m}$, $W_m^t \coloneqq \sqrt{m}W^{t/m}$, and $\Gamma_m^t \coloneqq e^{-\nu_m W_m^t \Dm^s}$. Note that $W_m$ is again a standard Brownian motion (e.g., see \cite[Lemma 9.4]{KS1991}).
\end{remark}

As advertised at the beginning of the introduction, our main result shows that with quantifiable high probability, solutions of the random PDE \eqref{eq:rpde} with Gevrey initial data are global and as $t\rightarrow\infty$, converge exponentially fast in Gevrey norm to the uniform distribution with the same mass as $\mu^t$. The essential point and importance of our work is that our result is agnostic to $\M$ (no gradient flow or repulsive-type assumptions) and to $\g$, subject to the very general condition \eqref{eq:gassmp}. This generality means our result covers equations for which global existence, let alone asymptotic behavior, is unknown or for which finite-time blow-up happens in the deterministic case.

The long-time behavior of equation \eqref{eq:spde} with $\nu=0$ is highly dependent on the nature of $\M$ and the singularity of $\g$. In general, little is known in the Hamiltonian case where $\M$ is antisymmetric. For instance, if $d=2$, $\M$ is rotation by $\frac{\pi}{2}$, and $\hat{\g}(k)=|k|^{-2}$, the equation becomes the incompressible Euler vorticity equation (see \cite[Section 1.2] {MP2012book}, \cite[Chapter 2]{MB2002}). Global well-posedness of classical/weak solutions \cite{Wolibner1933, Holder1933, Yudovich1963}  is known, but the asymptotic behavior is only partially understood (e.g., see \cite{Shnirelman2013, KS2014, BM2015id, MZ2020, IJ2020, IJ2022, DD2022, DE2022} and references therein). For the same choice of $d,\M$, if  $\hat{\g}(k)=|k|^{-\ga}$, for $\ga \in (0,2)$, then equation \eqref{eq:spde} becomes the inviscid generalized SQG equation \cite{CMT1994, PHS1994spec, HPGS1995, CCCGW2012}. Global existence of smooth solutions to the gSQG equation is a major open problem \cite{CF2002, Gancedo2008, CGI2019, CCG2020, BCCK2020, GP2021gsqg, HK2021, CCZ2021}. It is only known if one adds suitably deterministic strong diffusion (e.g., see \cite{CW1999QG, KNV2007gwp, CV2010, CV2012nmp}). In the gradient case where $\M=\mp \I$, global existence vs. finite-time blow-up depends on the choice of sign. We discuss only the model interaction $\hat{\g}(k) = |k|^{-\ga}$ which is sometimes called a fractional porous medium equation. Local well-posedness of classical solutions is known \cite{CJ2021}. But in the attractive case $\I$, suitably strong solutions blow up in finite time \cite{BLL2012}. In the repulsive case $-\I$, global existence, uniqueness, and asymptotic behavior of nonnegative classical and $L^\infty$ weak solutions are known when $\ga=2$ \cite{LZ2000, AS2008, BLL2012, SV2014} (see also \cite{MZ2005, AMS2011, Mainini2012}). The easier case $\ga>2$ follows by the same arguments \cite[Section 4]{CCH2014} (see also \cite{BLR2011}). For $0<\ga<2$, global existence, regularity, and asymptotic behavior of certain nonnegative weak solutions are known \cite{CV2011, CV2011asy, CSV2013, CV2015, BIK2015, CHSV2015, LMS2018}; but per our knowledge, these weak solutions are only known to be unique if $d=1$ \cite{BKM2010}. It is an open problem whether classical solutions are global if $0<\ga<2$. In the interests of completeness, we also mention there is a large body of work on the long-time behavior of the gradient case for regular potentials $\g$ satisfying convexity assumptions (on $\R^d$). For example, see \cite{Villani2004}, to which the title of our paper pays homage.\footnote{Many of the references discussed in this paragraph are set on $\R^d$; but in general, these results have analogues on $\T^d$.} 

There is an extensive literature on the effects of noise (e.g., ``regularization by noise''), a sample of which is contained in the references \cite{dBD2002, dBD2005, FGP2010, FGP2011, DT2011, Flandoli2011, GhV2014, CG2015, BFGM2019, GG2019, BNSW2020, FL2021, FGL2021, MST2021, BMX2023}. But to our knowledge, these previous works have not investigated the equilibriating properties of stochastic perturbations. Related in spirit to our work, we mention some works \cite{FM1995, Mattingly1999, EMS2001, BKL2001, HM2006, CGhV2014, FFGhR2017} on the ergodicity of fluid equations subject to stochastic forcing. But we emphasize these results add noise to a diffusive deterministic model, for which a result comparable to ours is already known (e.g., see \cite{GW2005} for 2D Navier-Stokes), and are instead about the balance between the injection of energy through noise and the dissipation of energy through viscosity. 

\subsection{Comments on the proof}\label{ssec:imtroCP}
The proof of \cref{thm:main} builds on the previous work \cite{RS2023}. The key point there to obtaining global solutions is a monotonicity formula for the Gevrey norm, asserting that it is strictly decreasing, provided the initial data and parameters are appropriately chosen. Showing this monotonicity requires carefully estimating the size of nonlinearity and showing it does not overwhelm the dissipative effect of the diffusion. In the present work, we go a step further by considering the evolution equation satisfied the unknown $\varrho^t \coloneqq \mu^t-1$. We show a dissipation inequality for the Gevrey norm of $\varrho^t$, which, under suitable conditions on the initial data, allows us to deduce the exponential-in-time decay of the Gevrey norm of $\varrho^t$ through a delicate continuity argument and Gr\"onwall's lemma.

One might ask why we work on the torus for the equation \eqref{eq:spde}, as opposed to on $\R^d$ for the equation
\begin{align}\label{eq:inhompde}
\p_t\theta +\div(\theta\M\nabla\g\ast\theta)  = \nu(1+\Dm^s)\theta\dot{W}^t
\end{align}
originally considered in \cite{RS2023}. The periodic setting is technically simpler since the spectrum is discrete and one does not have to worry about low-frequency issues, in particular, when $\ga>1$. This allows to replace the inhomogeneous multiplier $(1+\Dm^s)$, which kills off all Fourier modes, by $\Dm^s$, which kills off only nonzero Fourier modes. We expect that a similar analysis can be performed for \eqref{eq:inhompde} on $\R^d$ \emph{mutatis mutandis}, where now with high probability, $\mu^t \coloneqq e^{-\nu W^t(1+\Dm^s)}\theta^t$ should converge to zero (vacuum) in Gevrey norm as $t\rightarrow\infty$. 

Finally, let us mention that our method is quite robust and would also work, for example, for the periodic 3D incompressible Euler equation modified by random diffusion (alternatively, the 3D Navier-Stokes with white noise modulated hyperviscosity):
\begin{align}\label{eq:NSE}
\p_t u + u \cdot\nabla u = -\nabla p + \nu\Dm^s u\dot{W}.
\end{align}
This becomes clearer from rewriting \eqref{eq:NSE} in Leray projector form. One can show that with high probability, the transformed unknown $v^t \coloneqq \Gamma^t u^t$ converges in Gevrey norm exponentially fast as $t\rightarrow\infty$ to the vector $\int_{\T^3}v^0 dx$. To minimize the length of the paper, we leave such extensions to the interested reader.

\subsection{Organization of paper}\label{ssec:introorg}
We briefly comment on the organization of the remaining body of the paper. \cref{sec:LWP} introduces the scale of Gevrey function spaces, some elementary embeddings for these spaces, and then uses these spaces to show the local well-posedness for equation \eqref{eq:rpde}, with the main result being \cref{prop:lwp}. \cref{sec:Equi} then shows the global existence and exponential decay to equilibrium, completing the proof of \cref{thm:main}. This is spread over two preliminary results: \cref{prop:mon} and \cref{lem:lspan}.

\subsection{Notation}\label{ssec:introNot}
Let us conclude the introduction by reviewing the essential notation of the paper, following the conventions of \cite{RS2023}.

Given nonnegative quantities $A$ and $B$, we write $A\lesssim B$ if there exists a constant $C>0$, independent of $A$ and $B$, such that $A\leq CB$. If $A \lesssim B$ and $B\lesssim A$, we write $A\sim B$. To emphasize the dependence of the constant $C$ on some parameter $p$, we sometimes write $A\lesssim_p B$ or $A\sim_p B$. 

The Fourier and inverse transform of a function $f:\T^d\rightarrow\C^m$ are given by
\begin{equation}
\begin{split}
\hat{f}(k) = \F(f)(k) &\coloneqq \int_{\T^d}f(x)e^{-ix\cdot k}dx,\\
\check{f}(x) = \F^{-1}(f)(x) &\coloneqq \frac{1}{(2\pi)^d}\sum_{k\in\Z^d}f(k)e^{ik\cdot x},
\end{split}
\end{equation}
The homogeneous Bessel potential space $\dot{W}^{s,p}$ is defined by
\begin{equation}
\|f\|_{\dot{W}^{s,p}} \coloneqq \||\nabla|^{s}f\|_{L^p}, \qquad s\in\R, \ p\in (1,\infty),
\end{equation}
and the Fourier-Lebesgue space $\hat{W}^{s,p}$ is defined by
\begin{equation}\label{defn:fl_space}
\|f\|_{\hat{W}^{s,p}} \coloneqq \||\cdot|^s\hat{f}\|_{\ell^p}, \qquad s\in\R, \ p\in [1,\infty].
\end{equation}
$C^0([0,T); X)$ denotes the space of functions taking values in the Banach space $X$, which are continuous and bounded.

\section{Local well-posedness}\label{sec:LWP}
We show local well-posedness for the equation \eqref{eq:rpde}, the main result of this section being \cref{prop:lwp} stated below. This proposition---and its proof via a contraction mapping argument---is a modification of \cite[Proposition 3.1]{RS2023}. Although it was noted in \cite[Remark 1.7]{RS2023} that the results from that paper have corresponding analogues on the torus, we present the proof anyway because it is not written anywhere else and, more importantly, the two-tier function space (see (3.7) in the cited work) used on $\R^d$ becomes unnecessary on the torus because the spectrum is discrete. We also improve on \cite[Proposition 3.1]{RS2023} (and the earlier \cite[Proposition 3.1]{BNSW2020}) by removing the smallness condition $\beta < \frac{\nu^2}{2}$, which explains why the statement may not look comparable.

Set $A\coloneqq \Dm^{2s}$ and define the bilinear operator
\begin{equation}\label{eq:Bdef}
B(f,g) \coloneqq \div\Gamma\paren*{\Gamma^{-1}f(\M\nabla\g\ast\Gamma^{-1}g)}.
\end{equation}
Strictly speaking, $B$ is time-dependent through $\Gamma$. When necessary, we make explicit this time dependence by writing $B^t(f,g)$. Assume $\frac{1}{(2\pi)^d}\int_{\T^d}\mu^0dx=1$. In contrast to \cite{RS2023}, it will be more convenient to work with the unknown $\varrho^t \coloneqq \mu^t - 1$, which satisfies the equation
\begin{align}
\p_t\varrho^t &=-\div\Gamma\paren*{\Gamma^{-1}\varrho^t\M\nabla\g\ast\Gamma^{-1}\varrho^t } - \div(\M\nabla\g\ast\varrho^t) - \frac{\nu^2}{2}\Dm^{2s}\varrho^t \nn\\
&= -B^t(\varrho^t, \varrho^t) -L \varrho^t -\frac{\nu^2}{2}A\varrho^t, \label{eq:varrhoeqn}
\end{align}
where $L \coloneqq \div\left(\M\nabla\g\ast(\cdot)\right)$. Note that $L=0$ if $\M$ is antisymmetric. If we have a solution $\varrho^t$ to \eqref{eq:varrhoeqn}, then $\mu^t\coloneqq 1+\varrho^t$ is a solution to \eqref{eq:rpde}. So, there is no loss in working with the unknown $\varrho^t$. We rewrite the Cauchy problem for \eqref{eq:varrhoeqn} in the mild form,
\begin{equation}\label{eq:mild}
\varrho^t = e^{-t\left(\frac{\nu^2 A}{2}+L\right)}\varrho^0 - \int_0^t e^{-(t-\tau)\left(\frac{\nu^2 A}{2}+L\right)} B^\tau(\varrho^\tau,\varrho^\tau)d\tau.
\end{equation}

\begin{remark}\label{rem:symbLB}
Observe that the real part of the symbol of $\frac{\nu^2 A}{2}+L$ is
\begin{align}
\frac{\nu^2|k|^{2s}}{2} + \Re(\hat{\g}(k))\left(\M k\cdot k\right) \geq \frac{\nu^2|k|^{2s}}{2} - C|\M| |k|^{2-\ga},
\end{align}
where we have used assumption \eqref{eq:gassmp} to obtain the lower bound. If $2s\geq 2-\ga$, then for all $|k|\geq \left(\frac{2C|\M|}{\nu^2}\right)^{\frac{1}{2s-2+\ga}}$,  the symbol of $\frac{\nu^2 A}{2}+L$ has nonnegative real part.
\end{remark}

To perform a contraction mapping argument based on \eqref{eq:mild}, we use a scale of Gevrey function spaces from \cite{RS2023} (see \cite{FT1989, BNSW2020} for earlier $L^2$ special cases). For $a \geq 0$, $\kappa \in \mathbb{R}$, define
\begin{align}\label{defn:gevery norm}
\|f\|_{\G_a^{\kappa, r}} \coloneqq \left\|e^{a A^{1 / 2}} f\right\|_{\hat{W}^{\kappa s, r}}.
\end{align}
For $0<T<\infty$ and a continuous function $\phi:[0,T]\rightarrow [0,\infty)$, we define
\begin{equation}
\|f\|_{C_T^0{\G}_{\phi}^{\ka,r}} \coloneqq \sup_{0\leq t\leq T} \|f^t\|_{\G_{\phi^t}^{\ka,r}}.
\end{equation}
We write $C_{\infty}^0$ when $\sup_{0\leq t\leq T}$ is replaced by $\sup_{0\leq t<\infty}$. Define the Banach space
\begin{equation}
C_T^0\G_{\phi}^{\ka,r} \coloneqq \{f\in C([0,T]; \hat{W}^{\ka s,r}(\T^d)) : \|f\|_{C_T^0\G_{\phi}^{\ka,r}} < \infty\}.
\end{equation}
We also allow for $T=\infty$, replacing $[0,T]$ in the preceding line with $[0,\infty)$.

\begin{prop}\label{prop:lwp}
Let $d\geq 1$, $\ga>0$, $\max(\frac{1}{2},\frac{2-\ga}{2})<s\leq 1$. Given $\al,\be,\nu>0$, suppose $W$ is a realization from the set $\Omega_{\al,\be,\nu}$ and set $\phi^t\coloneqq \al+\beta t$.

There exists $r_0 \geq 1$ depending on $d,\ga,s$, such that the following holds. For any $1\leq r\leq r_0$, there exists $\sigma_0 \in (0,\frac{2s-1}{s})$ depending on $d,\ga,r,s$, such that for any $\sigma \in (\sigma_0,\frac{2s-1}{s})$ with $1-\ga\leq \sigma s$, there exists a time $T>0$ such that for  $\|\varrho^0\|_{\G_{\al}^{\sigma,r}} \leq R$, there exists a unique solution $\varrho \in C_T^0 \G_{\phi}^{\sigma,r}$ to the Cauchy problem for \eqref{eq:varrhoeqn}. Moreover,
\begin{equation}
\|\varrho\|_{C_T^0\G_{\al}^{\sigma,r}} \leq 2\|\varrho^0\|_{\G_{\al}^{\sigma,r}}.
\end{equation}
Additionally, if $\|\varrho_j^0\|_{\G_{\al}^{\sigma,r}}\leq R$, for $j\in\{1,2\}$, then
\begin{equation}
\|\varrho_1-\varrho_2\|_{C_T^0\G_{\al}^{\sigma,r}} \leq 2\|\varrho_1^0-\varrho_2^0\|_{\G_{\al}^{\sigma,r}}.
\end{equation}
\end{prop}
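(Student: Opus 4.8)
\medskip

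The plan is to set up a standard contraction mapping argument in the Banach space $C_T^0\G_{\phi}^{\sigma,r}$ for the map $\Phi$ defined by the right-hand side of the mild formulation \eqref{eq:mild}, i.e.\ $\Phi(\varrho)^t \coloneqq e^{-t(\frac{\nu^2 A}{2}+L)}\varrho^0 - \int_0^t e^{-(t-\tau)(\frac{\nu^2 A}{2}+L)}B^\tau(\varrho^\tau,\varrho^\tau)\,d\tau$. First I would record the mapping properties of the linear semigroup $e^{-t(\frac{\nu^2 A}{2}+L)}$ on the Gevrey spaces: on the Fourier side it is multiplication by $e^{-t(\frac{\nu^2}{2}|k|^{2s}+\hat\g(k)(\M k\cdot k))}$, and by \cref{rem:symbLB}, once $2s\geq 2-\ga$ the real part of the symbol is nonnegative for $|k|$ above an explicit threshold, while for the finitely many low modes below the threshold the symbol is bounded, so the semigroup is bounded on $\G_\phi^{\sigma,r}$ uniformly in $t$ (the growth $\phi^t=\al+\beta t$ in the Gevrey weight is absorbed using the gain $e^{-\frac{\nu^2}{2}t|k|^{2s}}$ from $A$ against $e^{\beta t|k|^s}$, which is exactly where one needs $2s>s$, automatic, and the low-frequency boundedness, which is where working on $\T^d$ rather than $\R^d$ removes the two-tier space). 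Crucially, removing the old smallness hypothesis $\beta<\frac{\nu^2}{2}$ is possible precisely because on the torus the zero mode is killed outright and the nonzero modes satisfy $|k|\geq 1$, so the quadratic gain $|k|^{2s}$ dominates the linear-in-$t$ loss $|k|^s$ for \emph{all} relevant frequencies, not just large ones.

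\medskip

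Second, I would prove the key bilinear estimate: there is $C_B>0$ and $\theta\in(0,1)$ such that for all $0\leq\tau<t\leq T$,
\[
\left\|e^{-(t-\tau)(\frac{\nu^2 A}{2}+L)}B^\tau(f,g)\right\|_{\G_{\phi^t}^{\sigma,r}} \lesssim (t-\tau)^{-\theta}\,\|f\|_{\G_{\phi^\tau}^{\sigma,r}}\,\|g\|_{\G_{\phi^\tau}^{\sigma,r}}.
\]
The operator $B^t(f,g)=\div\,\Gamma(\Gamma^{-1}f\,(\M\nabla\g\ast\Gamma^{-1}g))$ has, on the Fourier side, the form $\sum_{j+\ell=k} (\text{symbol})\,\hat f(j)\hat g(\ell)$ where the symbol involves $k$ (from the outer divergence), $\ell$ and $\hat\g(\ell)$ (from $\nabla\g\ast$), and the Gevrey weights $e^{\nu W^t(|j|^s+|\ell|^s-|k|^s)}$ coming from $\Gamma^{-1}$ on the two factors and $\Gamma$ outside. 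Since $W^t\geq 0$ is false in general but $\al+\beta t-\nu W^t\geq 0$ on $\Omega_{\al,\be,\nu}$, one controls these weights by the corresponding $\phi^t$-weights plus the subadditivity $|k|^s\leq (|j|+|\ell|)^s\leq |j|^s+|\ell|^s$ (valid for $s\leq 1$, which is exactly why $s\leq 1$ is assumed), so the $\Gamma$-factors are harmless and the $e^{a A^{1/2}}$ weights of $f,g$ recombine to bound the weight of the output. The remaining polynomial symbol is $|k|^{1}\cdot|\hat\g(\ell)|\cdot|\ell| \lesssim |k|\,|\ell|^{1-\ga}$; one loses one derivative from $\div$, which is recovered from the smoothing factor $e^{-(t-\tau)\frac{\nu^2}{2}|k|^{2s}}$ at the cost of a singular but integrable weight $(t-\tau)^{-1/(2s)}$, and one needs the Fourier-Lebesgue space $\hat W^{\sigma s,r}$ to be a multiplication-type algebra up to this derivative loss, which forces $\sigma s$ large enough (hence $\sigma>\sigma_0$) together with the Young/Hausdorff--Young bookkeeping that fixes the admissible range $1\leq r\leq r_0$. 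The constraint $\sigma<\frac{2s-1}{s}$, equivalently $\sigma s<2s-1$, is what guarantees $\theta=\frac{1+\sigma s - (\text{gain})}{2s}<1$ so that the time integral $\int_0^t (t-\tau)^{-\theta}d\tau$ converges; the constraint $1-\ga\leq\sigma s$ ensures $|\ell|^{1-\ga}$ is dominated by the $|\ell|^{\sigma s}$ available from the norm of $g$ even when $\ga<1$.

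\medskip

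Third, with these two estimates in hand, the contraction is routine: for $T$ small enough depending on $C_B$, $R$, $\theta$ (and the linear bound), $\Phi$ maps the ball $\{\|\varrho\|_{C_T^0\G_\phi^{\sigma,r}}\leq 2\|\varrho^0\|_{\G_\al^{\sigma,r}}\}$ into itself and is a contraction there, using $B^\tau(f,f)-B^\tau(g,g)=B^\tau(f-g,f)+B^\tau(g,f-g)$ for the difference estimate; the fixed point is the unique solution, and the stated bounds $\|\varrho\|_{C_T^0\G_\al^{\sigma,r}}\leq 2\|\varrho^0\|_{\G_\al^{\sigma,r}}$ and the Lipschitz dependence on data follow immediately (note $\|\cdot\|_{C_T^0\G_\al^{\sigma,r}}\leq\|\cdot\|_{C_T^0\G_\phi^{\sigma,r}}$ since $\al\leq\phi^t$). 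Continuity in time of $t\mapsto\varrho^t\in\hat W^{\sigma s,r}$ is obtained from the mild formula by dominated convergence, the strong continuity of the semigroup on the low-frequency part and its exponential decay on the high-frequency part, plus the integrability of the singular kernel. I expect the \textbf{main obstacle} to be the bilinear estimate in the correct Gevrey--Fourier-Lebesgue scale: one must simultaneously (i) absorb all three $\Gamma^{\pm}$ weights using only $s\leq 1$ subadditivity and the sign condition defining $\Omega_{\al,\be,\nu}$, (ii) close the convolution sum in $\ell^r$ with the genuine derivative loss from $\div$, which is the step dictating the precise admissible ranges of $r$ and $\sigma$, and (iii) keep the time singularity strictly below $1$, i.e.\ track the arithmetic $\sigma s<2s-1$ carefully; verifying that these three demands are mutually compatible — and that removing $\beta<\frac{\nu^2}{2}$ does not break (i) — is the technical heart of the proof.
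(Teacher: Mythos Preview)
Your overall strategy---contraction mapping in $C_T^0\G_\phi^{\sigma,r}$ via the mild formulation, a linear semigroup bound, a bilinear smoothing estimate, and the subadditivity $|k|^s\le|j|^s+|k-j|^s$ to tame the $\Gamma^{\pm 1}$ weights---is exactly the paper's approach, and your identification of the role of the constraints $s\le 1$, $\sigma<\tfrac{2s-1}{s}$, and $1-\ga\le\sigma s$ is correct.

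There is, however, a genuine error in your treatment of the linear semigroup and your explanation of why the hypothesis $\beta<\tfrac{\nu^2}{2}$ can be dropped. You claim that on $\T^d$ the nonzero modes satisfy $|k|\ge 1$, so ``the quadratic gain $|k|^{2s}$ dominates the linear-in-$t$ loss $|k|^s$ for \emph{all} relevant frequencies,'' making $e^{-t(\frac{\nu^2 A}{2}+L)}$ bounded on $\G_{\phi^t}^{\sigma,r}$ \emph{uniformly in $t$}. But $\tfrac{\nu^2}{2}|k|^{2s}\ge\beta|k|^s$ for all $|k|\ge 1$ is exactly the condition $\beta\le\tfrac{\nu^2}{2}$ you are trying to avoid; and even then the $L$-contribution $\Re(\hat\g(k))\M k\cdot k$ can make the full exponent $\beta|k|^s+\Re(\hat\g(k))\M k\cdot k-\tfrac{\nu^2}{2}|k|^{2s}$ strictly positive at finitely many low modes. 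So the semigroup is \emph{not} uniformly bounded in $t$ in general. The paper handles this by introducing
\[
\lambda \coloneqq \sup_{k}\Big(\beta|k|^s+\Re(\hat\g(k))\M k\cdot k-\tfrac{\nu^2}{2}|k|^{2s}\Big)\ge 0,
\]
which is finite because $2s>\max(s,2-\ga)$, and proving the linear bound $\|e^{-t(\frac{\nu^2 A}{2}+L)}f\|_{\G_{\phi^t}^{\sigma,r}}\le e^{\lambda t}\|f\|_{\G_\al^{\sigma,r}}$. The condition $\beta<\tfrac{\nu^2}{2}$ is removable not because the growth disappears, but because $e^{\lambda T}$ is harmless for $T$ small.

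This misconception propagates to your bilinear estimate: the clean bound $\|e^{-(t-\tau)(\frac{\nu^2 A}{2}+L)}B^\tau(f,g)\|_{\G_{\phi^t}^{\sigma,r}}\lesssim(t-\tau)^{-\theta}\|f\|\|g\|$ is false as stated, since at the finitely many output modes $|k|\le|k_0|$ where the symbol has nonpositive real part there is no smoothing to extract. The paper splits the output frequency into $|k|\le|k_0|$ (no smoothing, but $|k|^{\sigma s+1}$ is bounded, contributing a term $|k_0|^{s\sigma}\tfrac{e^{T\lambda}-1}{\lambda}$ after time integration) and $|k|>|k_0|$ (genuine smoothing with exponent $\theta=\tfrac{\sigma s+1}{2s}$, contributing $T^{1-\theta}$). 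Both pieces vanish as $T\to 0$, so the contraction closes. Your ``(gain)'' in $\theta$ is spurious: the paper's exponent is exactly $\tfrac{\sigma s+1}{2s}$, with no further gain from the convolution step.
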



\begin{remark}\label{rem:mcon}
The solutions given by \cref{prop:lwp} conserve mass, hence solutions to the original equation \eqref{eq:rpde} also conserve mass. One readily sees this by integrating both sides of \eqref{eq:mild} over $\T^d$ and by using the fundamental theorem of calculus together with $\mathcal{F}\left(e^{-t\left(\frac{\nu^2 A}{2}+mL\right)}\right)(0) = 1$. Thus,
\begin{equation}
\int_{\T^d}\varrho^t(x)dx =\int_{\T^d}\varrho^0(x)dx = 0.
\end{equation}
\end{remark}

\subsection{Gevrey and Sobolev embeddings}\label{ssec:lwpGev}
Before proceeding to the proof of \cref{prop:lwp}, we record some elementary embeddings satisfied by the spaces $\G_a^{\ka,r}$. For proofs of the following lemmas, see \cite[Section 2.2]{RS2023}. 

\begin{lemma}\label{lem:Gemb}
If $a'\geq a\geq 0$ and $\ka'\geq \ka$, then
\begin{align}
\|f\|_{\G_{a}^{\ka,r}} \leq e^{a-a'}\|f\|_{\G_{a'}^{\ka',r}}.
\end{align}
If $\ka'\geq \ka$ and $a'>a\geq 0$, then
\begin{equation}
\|f\|_{\G_{a}^{\ka',r}} \leq  \frac{\lceil{\ka'-\ka}\rceil !}{(a'-a)^{\lceil{\ka'-\ka}\rceil}} \|f\|_{\G_{a'}^{\ka,r}},
\end{equation}
where $\lceil{\cdot}\rceil$ denotes the usual ceiling function.
\end{lemma}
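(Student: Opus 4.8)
The plan is to reduce both inequalities to elementary pointwise bounds on Fourier symbols, exploiting that the underlying norm is a weighted $\ell^r$-norm. Unwinding the definitions, for $a\geq 0$ and $\ka\in\R$,
\begin{equation}
\|f\|_{\G_a^{\ka,r}} = \left\| |k|^{\ka s}\,e^{a|k|^s}\,\hat{f}(k)\right\|_{\ell^r(\Z^d)},
\end{equation}
since $A^{1/2}=\Dm^s$ has symbol $|k|^s$ and the $\hat{W}^{\ka s,r}$-norm is the $\ell^r$-norm weighted by $|k|^{\ka s}$. (The $k=0$ mode is immaterial for the applications in this paper, where the relevant functions have zero mean; in general one restricts to $k\in\Z^d\setminus\{0\}$.) Since the $\ell^r$-norm is monotone under pointwise domination of absolute values, for each inequality it suffices to bound, uniformly over $k\neq 0$, the ratio of the two weights appearing on the left- and right-hand sides; the one arithmetic fact used repeatedly is that $|k|\geq 1$, hence $|k|^s\geq 1$ (recall $s>0$), for $k\in\Z^d\setminus\{0\}$.

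For the first inequality ($a'\geq a\geq 0$, $\ka'\geq\ka$), the ratio of weights is
\begin{equation}
\frac{|k|^{\ka s}e^{a|k|^s}}{|k|^{\ka' s}e^{a'|k|^s}} = |k|^{(\ka-\ka')s}\,e^{(a-a')(|k|^s-1)}\cdot e^{a-a'}.
\end{equation}
Because $|k|^s\geq 1$ and $(\ka-\ka')s\leq 0$ we have $|k|^{(\ka-\ka')s}\leq 1$, and because $a-a'\leq 0$ we have $e^{(a-a')(|k|^s-1)}\leq 1$; hence the ratio is at most $e^{a-a'}$, which is the claim.

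For the second inequality ($a'>a\geq 0$, $\ka'\geq\ka$), put $m\coloneqq\lceil{\ka'-\ka}\rceil$, $b\coloneqq a'-a>0$, and $x\coloneqq|k|^s\geq 1$. Using $|k|\geq 1$ and $0\leq(\ka'-\ka)s\leq ms$,
\begin{equation}
\frac{|k|^{\ka' s}e^{a|k|^s}}{|k|^{\ka s}e^{a'|k|^s}} = |k|^{(\ka'-\ka)s}e^{-bx} \leq x^m e^{-bx} \leq \frac{m!}{b^m},
\end{equation}
where the last step is the single-term bound $e^{bx}\geq (bx)^m/m!$, valid since $bx\geq 0$; this is exactly the constant $\lceil{\ka'-\ka}\rceil!/(a'-a)^{\lceil{\ka'-\ka}\rceil}$ in the statement. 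I do not anticipate any genuine obstacle: the content is a short symbol computation, and the only points needing a moment's care are the harmless $k=0$ bookkeeping mentioned above and producing the stated constant, for which the Taylor-series comparison is cleaner than optimizing $x^m e^{-bx}$ by calculus.
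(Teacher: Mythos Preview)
Your proof is correct and is the natural symbol-by-symbol argument; the paper itself does not give a proof for this lemma but simply refers the reader to \cite[Section 2.2]{RS2023}, where the same elementary weight comparison is carried out. The only cosmetic remark is that your handling of $k=0$ is fine as stated, since the homogeneous weight $|k|^{\ka s}$ already kills that mode whenever $\ka s>0$, and in the applications the functions have zero mean anyway.
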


\begin{lemma}\label{lem:Sob}
If $1\leq p <r\leq\infty$, then
\begin{equation}
\|f\|_{\hat{W}^{s,p}} \lesssim_{d,p,r} \|f\|_{\hat{W}^{(s+\frac{d(r-p)}{rp})+,r}},
\end{equation}
where the notation $(\cdot)+$ means $(\cdot)+\vep$, for any $\vep>0$, with the implicit constant then depending on $\vep$ and possibly blowing up as $\vep\rightarrow 0^+$. If $2\leq p\leq \infty$, then if $\hat{f}(0) =0$,
\begin{equation}
\|f\|_{\hat{W}^{s,p}} \lesssim_{d,p} \|f\|_{\dot{W}^{s,\frac{p}{p-1}}}.
\end{equation}
\end{lemma}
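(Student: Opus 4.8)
\textbf{Proof plan for Lemma \ref{lem:Sob}.}

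The lemma has two parts, both of which are essentially Hausdorff--Young-type inequalities for Fourier series combined with H\"older's inequality on the frequency side. For the first inequality, the plan is to work directly on the Fourier side. By definition, $\|f\|_{\hat{W}^{s,p}} = \||k|^s\hat f(k)\|_{\ell^p_k}$ and $\|f\|_{\hat{W}^{\sigma,r}} = \||k|^{\sigma}\hat f(k)\|_{\ell^r_k}$ where $\sigma = s + \frac{d(r-p)}{rp} + \vep$. Writing $|k|^s\hat f(k) = |k|^{-\frac{d(r-p)}{rp}-\vep}\cdot |k|^{\sigma}\hat f(k)$, I would apply H\"older's inequality in the form $\ell^p \subset \ell^q\cdot\ell^r$ with exponents chosen so that $\frac1p = \frac1q + \frac1r$, i.e.\ $q = \frac{pr}{r-p}$ (valid since $p<r$, and interpreted as $q=\infty$ if $r=\infty$ should be excluded here since $p<r\le\infty$ forces $p<\infty$; one handles $r=\infty$ by a trivial direct bound). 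This gives
\begin{equation}
\||k|^s\hat f(k)\|_{\ell^p} \leq \left\||k|^{-\frac{d(r-p)}{rp}-\vep}\right\|_{\ell^q}\,\left\||k|^{\sigma}\hat f(k)\right\|_{\ell^r},
\end{equation}
and the first factor is finite precisely because $q\cdot\left(\frac{d(r-p)}{rp}+\vep\right) = d + q\vep > d$, so the series $\sum_{k\neq 0}|k|^{-d-q\vep}$ converges (with the $k=0$ term handled separately, using that the weight $|k|^s$ or a suitable convention makes that term controlled, or simply absorbing it). The implicit constant depends on $d,p,r,\vep$ and blows up as $\vep\to 0^+$ since then $q\vep\to 0$ and the $\ell^q$ norm of the weight diverges. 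This is exactly the claimed dependence.

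For the second inequality, the hypothesis is $2\le p\le\infty$ and $\hat f(0)=0$, and the claim is $\|f\|_{\hat{W}^{s,p}}\lesssim_{d,p}\|f\|_{\dot{W}^{s,p'}}$ with $p'=\frac{p}{p-1}\in[1,2]$. Here I would invoke the Hausdorff--Young inequality on the torus: for $1\le q\le 2$ with dual exponent $q'$, one has $\|\hat g\|_{\ell^{q'}}\lesssim_{d,q}\|g\|_{L^q(\T^d)}$. Apply this with $g = |\nabla|^s f$, so that $\hat g(k) = |k|^s\hat f(k)$ (well-defined for all $k\ne0$, and equal to $0$ at $k=0$ by the hypothesis $\hat f(0)=0$, so there is no issue with the singular symbol at the origin), $q = p'\in[1,2]$, and $q' = p$. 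Then
\begin{equation}
\|f\|_{\hat{W}^{s,p}} = \left\||k|^s\hat f(k)\right\|_{\ell^p} = \|\hat g\|_{\ell^p} \lesssim_{d,p}\|g\|_{L^{p'}} = \||\nabla|^s f\|_{L^{p'}} = \|f\|_{\dot{W}^{s,p'}},
\end{equation}
which is the desired bound. The constant depends only on $d$ and $p$, as stated. When $p=\infty$ one has $p'=1$ and Hausdorff--Young degenerates to the trivial bound $\|\hat g\|_{\ell^\infty}\le\|g\|_{L^1}$, which still holds.

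The only genuinely delicate point is the careful treatment of the zero frequency and of the constant's dependence on $\vep$ in the first part; everything else is a bookkeeping application of H\"older and Hausdorff--Young. I would also remark that for periodic functions the homogeneous space $\dot{W}^{s,p'}$ should be understood modulo constants (equivalently, restricted to mean-zero functions), which is consistent with the standing hypothesis $\hat f(0)=0$ in the second part; in the first part no mean-zero assumption is needed because the weight $|k|^s$ with $s$ as given, together with the convention for the $k=0$ term, keeps that contribution under control. Since the excerpt explicitly says ``For proofs of the following lemmas, see \cite[Section 2.2]{RS2023},'' I would keep the write-up brief and point to that reference for the routine verifications, presenting the H\"older/Hausdorff--Young skeleton above as the substance of the argument.
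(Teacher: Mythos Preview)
Your proposal is correct, and the paper itself omits the proof entirely, simply citing \cite[Section 2.2]{RS2023}; the H\"older-on-the-Fourier-side argument for the first inequality and the Hausdorff--Young argument for the second are exactly the standard route and almost certainly what appears in the cited reference. One minor slip: when $r=\infty$ the H\"older exponent is $q=p$ (since $\tfrac1p=\tfrac1q+0$), not $q=\infty$, and the same argument goes through without needing a separate ``trivial direct bound.''
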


\subsection{Contraction mapping argument}\label{ssec:lwpcm}
Throughout this subsection, assume that we have fixed a realization of $W$ from $\Omega_{\al,\be,\nu}$. Fix $\varrho^0$ and define the map
\begin{equation}
\varrho^t\mapsto (\Tc\varrho)^t \coloneqq e^{-t\left(\frac{\nu^2 A}{2}+L\right)}\varrho^0 - \int_0^t e^{-(t-\tau)\left(\frac{\nu^2 A}{2}+L\right)} B^\tau(\varrho^\tau,\varrho^\tau)d\tau.
\end{equation}
We check that $\Tc$ is well-defined on $C_T^0\G_{\phi}^{\sigma,r}$ for $\phi^t=\al+\be t$, with $\al,\be,\sigma,r>0$ satisfying the conditions in the statement of \cref{prop:lwp}. 

First, we control the linear term in \eqref{eq:mild}. We introduce some notation that will be used in what follows. Define the parameters
\begin{align}
|k_0| \coloneqq \sup\left\{|k|: k \in \Z^d, \ \beta |k|^s + \Re\left(\hat{\g}(k)\M k\cdot k\right)-\frac{\nu^2 |k|^{2s}}{2} \geq 0 \right\}, \label{eq:kadef} \\
\la \coloneqq \sup_{k\in\Z^d} \left(\beta |k|^s + \Re\left(\hat{\g}(k)\M k\cdot k\right)-\frac{\nu^2 |k|^{2s}}{2} \right).\label{eq:ladef}
\end{align}
Since $2s>\max(2-\ga,s)$ by assumption, $|k_0|$ is finite and
\begin{align}
\la = \sup_{k:|k|\leq |k_0|}  \left(\beta |k|^s + \Re\left(\hat{\g}(k)\M k\cdot k\right)-\frac{\nu^2 |k|^{2s}}{2} \right).
\end{align}

\begin{lemma}\label{lem:cmlin}
For any $1\leq r\leq\infty$, $\max(\frac{2-\ga}{2},0)<s\leq 1$, $\sigma\in\R$, and $\al,\be,\nu>0$, it holds that 
\begin{equation}
\forall t\geq 0,\qquad \|e^{-t\left(\frac{\nu^2 A}{2}+L\right)}f\|_{\G_{\phi^t}^{\sigma,r}} \leq e^{\la t }\|f\|_{\G_{\al}^{\sigma,r}}.
\end{equation}
\end{lemma}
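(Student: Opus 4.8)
The plan is to pass to the Fourier side, where the propagator $e^{-t(\frac{\nu^2 A}{2}+L)}$ acts by multiplication in the frequency variable $k\in\Z^d$ and the Gevrey norm $\|\cdot\|_{\G_a^{\sigma,r}}$ is a weighted $\ell^r$ norm in $k$; the asserted operator bound then reduces to an elementary termwise inequality whose content is exactly the definition \eqref{eq:ladef} of $\la$.

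Concretely, I would first record the symbols. Since $A=\Dm^{2s}$ has symbol $|k|^{2s}$, and computing $\div$ and $\nabla$ on the Fourier side (two factors of $ik$, contracted by $\M$) the operator $L=\div(\M\nabla\g\ast(\cdot))$ has symbol $-\hat\g(k)(\M k\cdot k)$, the generator $\frac{\nu^2 A}{2}+L$ is the Fourier multiplier with symbol $m(k)\coloneqq \frac{\nu^2}{2}|k|^{2s}-\hat\g(k)(\M k\cdot k)$, so $e^{-t(\frac{\nu^2 A}{2}+L)}$ is the multiplier with symbol $e^{-tm(k)}$. On the norm side, $A^{1/2}=\Dm^s$ has symbol $|k|^s$, so by the definitions \eqref{defn:gevery norm} and \eqref{defn:fl_space},
\[
\|e^{-t(\frac{\nu^2 A}{2}+L)}f\|_{\G_{\phi^t}^{\sigma,r}}=\big\||k|^{\sigma s}\,e^{\phi^t|k|^s}\,|e^{-tm(k)}|\,|\hat f(k)|\big\|_{\ell^r_k},\qquad \|f\|_{\G_\al^{\sigma,r}}=\big\||k|^{\sigma s}\,e^{\al|k|^s}\,|\hat f(k)|\big\|_{\ell^r_k}.
\]
Because $\M$ has real entries and $k\in\Z^d$, the scalar $\M k\cdot k$ is real, so $|e^{-tm(k)}|=\exp\!\big(-t(\tfrac{\nu^2}{2}|k|^{2s}-\Re(\hat\g(k))(\M k\cdot k))\big)$, the imaginary part of $\hat\g(k)$ contributing only a unit-modulus phase. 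Hence, writing $\phi^t=\al+\be t$, the ratio of the $k$-th weight on the left to that on the right equals $\exp\!\big(t(\be|k|^s+\Re(\hat\g(k)\M k\cdot k)-\tfrac{\nu^2}{2}|k|^{2s})\big)\le e^{\la t}$ for every $t\ge0$ and every $k\in\Z^d$, directly from \eqref{eq:ladef} (the case $t=0$ being trivial equality). Since $e^{\la t}$ is independent of $k$, it factors out of the $\ell^r_k$ norm, which gives the claim; for $r=\infty$ the same argument applies with the supremum replacing the sum.

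The computation is essentially bookkeeping, so I do not expect a genuine obstacle; the two points meriting care are (i) getting the sign in the symbol of $L$ right and noticing that, $\M k\cdot k$ being real, only $\Re\hat\g(k)$ enters $|e^{-tm(k)}|$; and (ii) recording that $\la<\infty$, which is precisely the finiteness of $|k_0|$ noted after \eqref{eq:kadef}: under the hypothesis $2s>\max(2-\ga,s)$ the terms $\be|k|^s$ and $|\Re(\hat\g(k))(\M k\cdot k)|\lesssim|\M||k|^{2-\ga}$ are both dominated by $\tfrac{\nu^2}{2}|k|^{2s}$ as $|k|\to\infty$, so the supremum in \eqref{eq:ladef} is attained over a finite set and $e^{\la t}$ is a finite constant. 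This also shows $e^{-t(\frac{\nu^2 A}{2}+L)}$ maps $\G_\al^{\sigma,r}$ boundedly into $\G_{\phi^t}^{\sigma,r}$, which is the well-definedness implicit in the statement.
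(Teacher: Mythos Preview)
Your proof is correct and follows essentially the same route as the paper's: both pass to the Fourier side, identify the multiplier symbol of $e^{-t(\frac{\nu^2 A}{2}+L)}$, and reduce the norm bound to a termwise inequality governed by the definition of $\la$. Your version is in fact slightly more streamlined---you invoke $\la=\sup_k(\cdots)$ directly for every $k$, whereas the paper splits into $|k|\le|k_0|$ and $|k|>|k_0|$ and then recombines using $\la\ge0$; but this is a cosmetic difference, not a different argument.
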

\begin{proof}
Unpacking the definition of the $\G_{\phi^t}^{\sigma, r}$ norm, it holds that
\begin{align}
\|e^{-t\left(\frac{\nu^2 A}{2}+L\right)}f\|_{\G_{\phi^t}^{\sigma,r}}^r &= \left\|e^{\phi^t A^{1 / 2}} e^{-t\left(\frac{\nu^2 A}{2}+L\right)}f\right\|_{\hat{W}^{\sigma s,r}}^r \nn\\
&=\sum_{k}|k|^{r s \sigma}\left|e^{\phi^t|k|^s-\frac{\nu^2|k|^{2s}}{2} + \left(\M k\cdot k\right)\hat{\g}(k)} \hat{f}(k)\right|^r \nn\\
&= \left[\sum_{|k|\leq |k_0|} + \sum_{|k| >|k_0|}\right]|k|^{r s \sigma}e^{r\alpha |k|^s}\left|e^{ t\left(\beta |k|^s + \Re\left(\hat{\g}(k)\M k\cdot k\right)-\frac{\nu^2 |k|^{2s}}{2}\right)} \hat{f}(k)\right|^r \nn\\
&\leq \sum_{|k|\leq |k_0|} |k|^{r s \sigma}e^{r\alpha |k|^s}e^{rt\la }|\hat{f}(k)|^r +\sum_{|k|> |k_0|}|k|^{r s \sigma}e^{r\alpha |k|^s}  |\hat{f}(k)|^{r}\nn\\
&\leq e^{rt \la }\|e^{\al A^{1/2}}f\|_{\hat{W}^{\sigma s,r}}^r,
\end{align}
where the final line follows from $\la \geq 0$.
\end{proof}

Next, we control the bilinear term in \eqref{eq:mild}.
\begin{lemma}\label{lem:cmnl}
Let $d\geq 1$, $\ga>0$, $\max(\frac{2-\ga}{2},\frac12)<s\leq 1$. There exists an $r_0 \in [1,\infty]$, depending on $d,s$, such that the following holds. For any $1\leq r \leq r_0$, there exists $\sigma_0 \in (0,\frac{2s-1}{s})$ depending on $d,s,r$, such that for any $\sigma \in (\sigma_0, \frac{2s-1}{s})$ with $1-\ga\leq \sigma s$, there exists a constant $C$ depending only on $d,r,q,\sigma,s,\beta,\nu$, such that for any $T>0$,
\begin{multline}
\left\|\int_0^t e^{-\left(\frac{\nu^2A}{2} + L\right)}B^\tau(\varrho_1^\tau,\varrho_2^\tau)d\tau\right\|_{C_T^0\G_{\phi}^{\sigma,r}} \\
\leq C|\M| \|\varrho_1\|_{C_T^0\G_{\phi}^{\sigma,r}} \|\varrho_2\|_{C_T^0\G_{\phi}^{\sigma,r}} \Bigg(|k_0|^{s\sigma}\left(\frac{e^{t\la}-1}{\la} \right) + T^{1-\frac{\sigma s+1}{2s}}\Bigg).
\end{multline}
\end{lemma}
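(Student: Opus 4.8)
The plan is to work entirely on the Fourier side. First I would expand the bilinear operator: since $\div$, $\Gamma$, and convolution with $\nabla\g$ are all Fourier multipliers, \eqref{eq:Bdef} gives, for $k\neq 0$,
\[
\widehat{B^\tau(f,g)}(k)=-\sum_{\ell\in\Z^d}(k\cdot\M\ell)\,\hat\g(\ell)\,e^{\nu W^\tau(|k-\ell|^s+|\ell|^s-|k|^s)}\,\hat f(k-\ell)\,\hat g(\ell),
\]
while $\widehat{B^\tau(f,g)}(0)=0$. The crucial structural input is the sub-additivity $|k|^s\le|k-\ell|^s+|\ell|^s$, valid since $0<s\le 1$: on a path in $\Omega_{\al,\be,\nu}$ one has $\nu W^\tau\le\phi^\tau$, hence $e^{\nu W^\tau(|k-\ell|^s+|\ell|^s-|k|^s)}\le e^{\phi^\tau(|k-\ell|^s+|\ell|^s-|k|^s)}$ (the left side is $\le 1$ anyway when $W^\tau<0$). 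Combined with $|\hat\g(\ell)|\lesssim|\ell|^{-\ga}$ from \eqref{eq:gassmp} and $|k\cdot\M\ell|\le|\M||k||\ell|$, this lets me ``distribute'' the Gevrey weight onto the two inputs:
\[
e^{\phi^\tau|k|^s}\bigl|\widehat{B^\tau(f,g)}(k)\bigr|\lesssim|\M|\,|k|\sum_{\ell\neq 0}|\ell|^{1-\ga}F_{k-\ell}G_\ell,\qquad F_m:=e^{\phi^\tau|m|^s}|\hat f(m)|,\quad G_m:=e^{\phi^\tau|m|^s}|\hat g(m)|.
\]

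Next I would insert this into the Duhamel integral, apply $e^{\phi^t A^{1/2}}$, and use Minkowski to pull the $\hat W^{\sigma s,r}$ norm inside $\int_0^t d\tau$. Because $\phi^t=\phi^\tau+\be(t-\tau)$, a direct symbol computation shows that the modulus of the symbol of $e^{\phi^t A^{1/2}}e^{-(t-\tau)(\frac{\nu^2A}2+L)}$ at frequency $k$ equals $e^{\phi^\tau|k|^s}e^{-(t-\tau)g(k)}$, with $g(k)=\frac{\nu^2|k|^{2s}}2-\Re\hat\g(k)(\M k\cdot k)-\be|k|^s$. By \eqref{eq:kadef}--\eqref{eq:ladef}: if $|k|\le|k_0|$ then $-g(k)\le\la$, so the symbol is $\le e^{\phi^\tau|k|^s}e^{(t-\tau)\la}$; if $|k|>|k_0|$ then $g(k)>0$, and since $2s>\max(2-\ga,s)$ the leading term $\tfrac{\nu^2}2|k|^{2s}$ dominates, so in fact $g(k)\ge c|k|^{2s}$ for some $c=c(d,\ga,s,\be,\nu,\g,\M)>0$, giving $\le e^{\phi^\tau|k|^s}e^{-c(t-\tau)|k|^{2s}}$. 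I would then estimate the low- and high-frequency parts of the output separately. On $|k|\le|k_0|$ one bounds $|k|^{\sigma s}\le|k_0|^{\sigma s}$, pulls out $e^{(t-\tau)\la}$, integrates $\int_0^t e^{(t-\tau)\la}\,d\tau=\tfrac{e^{t\la}-1}\la$, and is left with an $\ell^r$ norm of the Step-1 convolution. On $|k|>|k_0|$ the smoothing factor $e^{-c(t-\tau)|k|^{2s}}$ absorbs the entire loss $|k|^{\sigma s}\cdot|k|=|k|^{\sigma s+1}$ at the cost $|k|^{\sigma s+1}e^{-c(t-\tau)|k|^{2s}}\lesssim(t-\tau)^{-\frac{\sigma s+1}{2s}}$, which is $\tau$-integrable on $[0,t]$ precisely when $\sigma<\tfrac{2s-1}s$, with $\int_0^t(t-\tau)^{-\frac{\sigma s+1}{2s}}\,d\tau\lesssim T^{1-\frac{\sigma s+1}{2s}}$; this produces the two summands in the claimed bound.

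What remains in both regimes is the pure convolution estimate
\[
\Bigl\|\sum_{\ell\neq 0}|\ell|^{1-\ga}F_{k-\ell}G_\ell\Bigr\|_{\ell^r_k}\lesssim_{d,\ga,r,s,\sigma}\bigl\||m|^{\sigma s}F_m\bigr\|_{\ell^r}\bigl\||m|^{\sigma s}G_m\bigr\|_{\ell^r}=\|f\|_{\G_{\phi^\tau}^{\sigma,r}}\|g\|_{\G_{\phi^\tau}^{\sigma,r}}.
\]
I would prove this by writing $F_m=|m|^{-\sigma s}(|m|^{\sigma s}F_m)$ and likewise for $G$ — legitimate since the sum runs over $\ell\neq 0$ and the estimate is only ever applied to mean-zero $\varrho_j$, so $\hat\varrho_j(0)=0$ (cf.\ \cref{rem:mcon}) — and then closing with Young's inequality $\ell^r\ast\ell^1\hookrightarrow\ell^r$ together with H\"older, which requires $|\cdot|^{-\sigma s}$ and $|\cdot|^{1-\ga-\sigma s}$ to lie in $\ell^{r'}(\Z^d\setminus\{0\})$. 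This forces $\sigma s$ to be large enough depending on $d,r$ (which defines $\sigma_0$) and $r\le r_0$ small enough depending on $d,s$, while the standing hypothesis $1-\ga\le\sigma s$ controls the potential's singularity; alternatively this step can be imported from \cite[Proposition~3.1]{RS2023} via \cref{lem:Sob}. The main obstacle is exactly this last step — or rather, the bookkeeping of the parameter constraints behind it: $\sigma$ must simultaneously satisfy $\sigma<\tfrac{2s-1}s$ (time-integrability of the high-frequency piece), $\sigma>\sigma_0(d,r,s)$ and $\sigma s\ge 1-\ga$ (convolution estimate), with $r\le r_0(d,s)$, and one must check this window is nonempty, which is possible only because $s>\max(\tfrac12,\tfrac{2-\ga}2)$.
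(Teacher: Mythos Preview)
Your proposal is correct and follows essentially the same route as the paper's proof: Fourier expansion of $B^\tau$, distribution of the Gevrey weight via sub-additivity of $|\cdot|^s$ and the constraint $\phi^\tau-\nu W^\tau\ge 0$, the low/high frequency split at $|k_0|$ with the exponential bound $e^{(t-\tau)\la}$ on low modes and heat-kernel smoothing $|k|^{\sigma s+1}e^{-c(t-\tau)|k|^{2s}}\lesssim(t-\tau)^{-\frac{\sigma s+1}{2s}}$ on high modes, and finally a Young/H\"older (equivalently, Young plus \cref{lem:Sob}) convolution estimate with the attendant parameter constraints. The paper merely packages the first step as a change of unknown $\varrho_j^t=e^{-\phi^tA^{1/2}}|\nabla|^{-\sigma s}\rho_j^t$ and spells out the case analysis for the convolution estimate in more detail, but the substance is identical.
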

\begin{proof}[Proof of \cref{lem:cmnl}]
We make the change of unknown $\varrho_j^t\coloneqq e^{-\phi^t A^{1/2}}|\nabla|^{-\sigma s} \rho_j^t$, so that
\begin{equation}
\|\rho_j^t\|_{\hat{L}^r} = \|\varrho_j^t\|_{\G_{\phi^t}^{\sigma,r}}.
\end{equation}
By Minkowski's inequality, we see that
\begin{align}
\left\|e^{\phi^t A^{1/2}} \int_0^{t}e^{-(t-\tau)\left(\frac{\nu^2 A}{2}+L\right)}B^\tau(\varrho_1^\tau,\varrho_2^\tau) d\tau\right\|_{\hat{W}^{\sigma s,r}} \leq \int_0^t \left\|e^{\phi^t A^{1/2} - (t-\tau)\left(\frac{\nu^2 A}{2}+L\right)}B^\tau(\varrho_1^\tau,\varrho_2^\tau)\right\|_{\hat{W}^{\sigma s,r}}d\tau,
\end{align}
and by definition of the $\hat{W}^{\sigma s,r}$ norm, the preceding right-hand side equals
\begin{multline}
\int_0^t \Bigg(\sum_{k}e^{r(t-\tau)\left(\beta |k|^s + \Re(\hat{\g}(k)\M k\cdot k)-\frac{\nu^2 |k|^{2s}}{2}\right)} |k|^{r\sigma s}\\
 \left|\sum_{j} \frac{|k\cdot\M j| |\hat{\g}(j)|}{|k-j|^{\sigma s}|j|^{\sigma s}} e^{\left(\phi^\tau-\nu W^\tau\right)\left[|k|^s-|k-j|^s-|j|^s\right]} \hat{\rho}^\tau_1(k-j) \hat{\rho}^\tau_2(j)\right|^r \Bigg)^{1/r} d \tau.
\end{multline}
We adopt the notational convention $\frac{\hat{\rho}_1^\tau(k-j)}{|k-j|^{\sigma s}} \coloneqq 0$ when $k=j$ (similarly, for $\hat{\rho}_2^\tau$). Using $\phi^t-\phi^\tau = \be(t-\tau)$, the preceding expression is controlled by
\begin{multline}
\int_0^t \Bigg(\sum_{k}e^{r(t-\tau)\left(\beta |k|^s + \Re(\hat{\g}(k)\M k\cdot k)-\frac{\nu^2 |k|^{2s}}{2}\right)}|k|^{r\sigma s}\\
\left|\sum_{j } \frac{|k\cdot\M j| |\hat{\g}(j)|}{|k-j|^{\sigma s}|j|^{\sigma s}} e^{\left(\phi^\tau-\nu W^\tau\right)\left[|k|^s-|k-j|^s-|j|^s\right]} \hat{\rho}^\tau_1(k-j) \hat{\rho}^\tau_2(j)\right|^r \Bigg)^{1/r} d \tau. \label{eq:insumpre}
\end{multline}
Since $0<s\leq 1$, we have $|k|^s - |k-j|^s-|j|^s\leq 0$ for all $k,j\in \Z^d$. Since $\phi^\tau-\nu W^\tau\geq 0$ for all $0\leq\tau\leq t$ by assumption, it follows that
\begin{equation}
e^{\left(\phi^\tau-\nu W^\tau\right)\left[|k|^s-|k-j|^s-|j|^s\right]} \leq 1.
\end{equation} 
Thus, for fixed $k$, estimating the inner sum of \eqref{eq:insumpre}, we find
\begin{align}
&\left|\sum_{j} \frac{|k\cdot\M j| |\hat{\g}(j)|}{|k-j|^{\sigma s}|j|^{\sigma s}} e^{\left(\phi^\tau-\nu W^\tau\right)\left[|k|^s-|k-j|^s-|j|^s\right]} \hat{\rho}^\tau_1(k-j) \hat{\rho}^\tau_2(j)\right|^r  \nn\\
&\lesssim \left(\sum_{j} \frac{|k\cdot\M j| |\hat{\g}(j)|}{|k-j|^{\sigma s}|j|^{\sigma s}}  |\hat{\rho}^\tau_1(k-j)| |\hat{\rho}^\tau_2(j)|\right)^r \nn\\
&\lesssim  |\mathbb{M}|^r |k|^r \left(\sum_{j } |k-j|^{-\sigma s}  |\hat{\rho}^\tau_1(k-j)| |j|^{1-\gamma-\sigma s} |\hat{\rho}^\tau_2(j)|\right)^r,
\end{align}
where we have implicitly used that $\g$ satisfies \eqref{eq:gassmp} to obtain the last line. With $|k_0|$ defined as in \eqref{eq:kadef} above, there exists a constant $\d>0$, such that for frequencies $|k|>|k_0|$, 
\begin{equation}
\frac{\nu^2 |k|^{2s}}{2} - \beta |k|^s - \Re(\hat{\g}(k)\M k\cdot k)\geq \d |k|^{2s}.
\end{equation}
Furthermore, observe that by writing $|k| = (t-\tau)^{-\frac{1}{2s}}(t-\tau)^{\frac{1}{2s}}|k|$, it follows from the power series for $z\mapsto e^{z}$ that
\begin{equation}
e^{r(t-\tau)\left(\beta |k|^s + \Re(\hat{\g}(k)\M k\cdot k)-\frac{\nu^2 |k|^{2s}}{2}\right) }|k|^{rs\sigma} \leq e^{-r\d(t-\tau)|k|^{2s}}|k|^{r\sigma s} \lesssim_\d (t-\tau)^{-\frac{r(\sigma s+1)}{2s}}.
\end{equation}
For frequencies $|k|\leq |k_0|$ (of which there are at most finitely many), we crudely estimate
\begin{align}
e^{r(t-\tau)\left(\beta |k|^s + \Re(\hat{\g}(k)\M k\cdot k)-\frac{\nu^2 |k|^{2s}}{2}\right) }|k|^{rs\sigma} \leq e^{r(t-\tau)\la}|k_0|^{rs\sigma},
\end{align}
with $\la$ as in \eqref{eq:ladef}. With these observations, we find
\begin{multline}
\int_0^t \Bigg(\sum_{k}e^{r(t-\tau)\left(\beta |k|^s + \Re(\hat{\g}(k)\M k\cdot k)-\frac{\nu^2 |k|^{2s}}{2}\right)}|k|^{r\sigma s} \\
\left|\sum_{j} \frac{|k\cdot\M j| |\hat{\g}(j)|}{|k-j|^{\sigma s}|j|^{\sigma s}} e^{\left(\phi^\tau-\nu W^\tau\right)\left[|k|^s-|k-j|^s-|j|^s\right]} \hat{\rho}^\tau_1(k-j) \hat{\rho}^\tau_2(j)\right|^r \Bigg)^{1/r} d \tau\\
\lesssim |\M||k_0|^{s\sigma}\int_0^t e^{(t-\tau)\la} \left(\sum_{|k|\leq |k_0|}\left(\sum_{j}|k-j|^{-\sigma s}  |\hat{\rho}^\tau_1(k-j)| |j|^{1-\gamma-\sigma s} |\hat{\rho}^\tau_2(j)|\right)^r \right)^{1/r} d \tau\\
+ |\M| \int_0^t (t-\tau)^{-\frac{(\sigma s+1)}{2s}} \left(\sum_{|k|>|k_0|}\left(\sum_{j}|k-j|^{-\sigma s}  |\hat{\rho}^\tau_1(k-j)| |j|^{1-\gamma-\sigma s} |\hat{\rho}^\tau_2(j)|\right)^r \right)^{1/r} d \tau.
\end{multline}
Thus, it remains to estimate the factor containing the $\ell_k^r$ norm of the sum over $j$. For this, we use Young's inequality followed by Sobolev embedding \cref{lem:Sob},
\begin{align}
&\left(\sum_{k}\left(\sum_{j}|k-j|^{-\sigma s}  |\hat{\rho}^\tau_1(k-j)| |j|^{1-\gamma-\sigma s} |\hat{\rho}^\tau_2(j)|\right)^r \right)^{1/r} \nn\\
&\leq \||\cdot|^{-\sigma s}\hat{\rho}_1^\tau\|_{\ell^p} \||\cdot|^{1-\ga-\sigma s} \hat{\rho}_2^\tau\|_{\ell^{\frac{rp}{(r+1)p-r}}} \nn\\
&\lesssim  \|\rho_1^\tau\|_{\hat{W}^{-\sigma s, 1}} \|\rho_2^\tau\|_{\hat{W}^{1-\ga-\sigma s, 1}}\indic_{r=1} + \|\rho_1^\tau\|_{\hat{W}^{(\frac{d(r-1)}{r}-\sigma s)+, r}} \|\rho_2^\tau\|_{\hat{W}^{1-\ga-\sigma s, r}}\indic_{\substack{p=1 \\ r>1}} \nn\\
&\ph + \|\rho_1^\tau\|_{\hat{W}^{-\sigma s,r}} \|\rho_2^\tau\|_{\hat{W}^{(1-\ga-\sigma s + \frac{d(r-1)}{r})+, r}}\indic_{\substack{ p=r \\r>1}}\nn\\
&\ph +\|\rho_1^\tau\|_{\hat{W}^{(\frac{d(r-p)}{rp}-\sigma s)+,r}} \|\rho_2^\tau\|_{\hat{W}^{(1-\ga-\sigma s+\frac{d(p-1)}{p})+, r}}\indic_{\substack{1<p<r \\ r>1}} \nn\\
&= \|e^{\phi^\tau A^{1/2}}\varrho_1^\tau\|_{\hat{W}^{0,1}}\|e^{\phi^\tau A^{1/2}}\varrho_2^\tau\|_{\hat{W}^{1-\ga,1}}\indic_{r=1} + \|e^{\phi^\tau A^{1/2}}\varrho_1^\tau\|_{\hat{W}^{\frac{d(r-1)}{r}+,r}}\|e^{\phi^\tau A^{1/2}}\varrho_2^\tau\|_{\hat{W}^{1-\ga,r}}\indic_{\substack{p=1 \\ r>1}} \nn\\
&\ph +  \|e^{\phi^\tau A^{1/2}}\varrho_1^\tau\|_{\hat{W}^{0,r}}\|e^{ \phi^\tau A^{1/2}}\varrho_2^\tau\|_{\hat{W}^{(1-\ga + \frac{d(r-1)}{r})+,r}}\indic_{\substack{ p=r \\r>1}}\nn \\
&\ph + \|e^{\phi^\tau A^{1/2}}\varrho_1^\tau\|_{\hat{W}^{(\frac{d(r-p)}{rp})+,r}} \| e^{\phi^\tau A^{1/2}}\varrho_2^\tau\|_{\hat{W}^{(1-\ga+\frac{d(p-1)}{p})+, r}}\indic_{\substack{1<p<r \\ r>1}}, \label{eq:lwphf}
\end{align}
where the final equality follows from unpacking the definition of $\rho^t$. To obtain estimates that close, the top Sobolev index appearing in \eqref{eq:lwphf} must be $\leq \sigma s$. This leads to the following conditions:
\begin{equation}
\begin{cases}
1-\ga \leq \sigma s, & {r=1} \\
\frac{d(r-1)}{r}<\sigma s \ \text{and} \  1-\ga \leq \sigma s, & {p=1 \ \text{and} \ r>1} \\
1-\ga+\frac{d(r-1)}{r}<\sigma s, & {p=r \ \text{and} \ r>1} \\
\frac{d(r-p)}{rp}<\sigma s \ \text{and} \ 1-\ga+\frac{d(p-1)}{p} < \sigma s, & {1<p<r \ \text{and} \ r>1}.
\end{cases}
\end{equation}
Assuming the preceding conditions are met and also that $\frac{(\sigma s+1)}{2s} <1$, it follows from our work that
\begin{multline}\label{eqn:bi-ineq}
\left\|e^{\phi^t A^{1/2}} \int_0^{t}e^{-(t-\tau)\left(\frac{\nu^2 A}{2}+L\right)}B^\tau(\varrho_1^\tau,\varrho_2^\tau) d\tau\right\|_{\hat{W}^{\sigma s,r}} \lesssim |\M||k_0|^{s\sigma}\left(\frac{e^{T\la}-1}{\la} \right)\| \varrho_1\|_{C_T^0\G_{\phi}^{\sigma,r}} \|\varrho_2\|_{C_T^0G_{\phi}^{\sigma,r}} \\
+|\M| T^{1-\frac{\sigma s+1}{2s}}\| \varrho_1\|_{C_T^0\G_{\phi}^{\sigma,r}} \|\varrho_2\|_{C_T^0G_{\phi}^{\sigma,r}}
\end{multline}
for any $0\leq t\leq T$. We adopt the convention that the first term in the preceding right-hand side is zero if $\la=0$.

To complete the proof of the lemma, it is important to list all the conditions we imposed on the parameters $d,\ga,\sigma,s,r$ during the course of the above analysis:
\begin{enumerate}[(LWP1)]
\item\label{sCon}
$\frac{2-\ga}{2}<s\leq 1$;
\item\label{pCon}
	\begin{enumerate}
	\item\label{pCona}
	$r=1$ and $1-\ga\leq \sigma s$,
	\item\label{pConb}
	or $r>1$ and $\frac{d(r-1)}{r}<\sigma s$ and $1-\ga\leq \sigma s$,
	\item\label{pConc}
	or $r>1$ and $1-\ga + \frac{d(r-1)}{r}<\sigma s$,
	\item\label{pCond}
	or $r>1$ and $\exists p\in (1,r)$ such that $\frac{d(r-p)}{rp}<\sigma s$ and $1-\ga +\frac{d(p-1)}{p}<\sigma s$;
	\end{enumerate}
\item\label{ssCon}
$\frac{(\sigma s+1)}{2s}<1$
\end{enumerate}
We refer the reader to the proof of \cite[Lemma 3.7]{RS2023} for the existence of a non-trivial choice of parameters satisfying the above conditions. 
\end{proof}

\medskip

\begin{proof}[Proof of \cref{prop:lwp}]
Putting together the estimates of \Cref{lem:cmlin,lem:cmnl}, we have shown that there exists a constant $C>0$ depending on $d,\ga,r,\sigma,s,\beta,\nu$, such that
\begin{equation}\label{eq:Tmu}
\|\Tc(\varrho)\|_{C_T^0\G_{\phi}^{\sigma,r}} \leq e^{T\la}\|\varrho^0\|_{\G_{\phi}^{\sigma,r}} + C|\M| \|\varrho\|_{C_T^0\G_{\phi}^{\sigma,r}}^2  \Bigg(|k_0|^{s\sigma}\left(\frac{e^{T\la}-1}{\la} \right) + T^{1-\frac{\sigma s+1}{2s}}\Bigg)
\end{equation}
and
\begin{multline}\label{eq:Tmu12}
\|\Tc(\varrho_1)-\Tc(\varrho_2)\|_{C_T^0\G_{\phi}^{\sigma,r}} \leq C|\M|\Bigg(|k_0|^{s\sigma}\left(\frac{e^{T\la}-1}{\la} \right) + T^{1-\frac{\sigma s+1}{2s}}\Bigg)\|\varrho_1-\varrho_2\|_{C_T^0\G_{\phi}^{\sigma,r}}\\
\times \paren*{\|\varrho_1\|_{C_T^0\G_{\phi}^{\sigma,r}}+\|\varrho_2\|_{C_T^0\G_{\phi}^{\sigma,r}}}.
\end{multline}
We want to show that for any appropriate choice of $T$, the map $\Tc$ is a contraction on the closed ball $B_{R}(0)$ of radius $R\geq 2\|\varrho^0\|_{\G_{\phi}^{\sigma,r}}$ centered at the origin in the space $C_T^0\G_{\phi}^{\sigma,r}$. From the estimates \eqref{eq:Tmu} and \eqref{eq:Tmu12}, we see that if
\begin{align}
&e^{T\la} \leq \frac{3}{2}, \label{eq:lwpT1}\\
&2C|\M|R\Bigg(|k_0|^{s\sigma}\left(\frac{e^{T\la}-1}{\la} \right) + T^{1-\frac{\sigma s+1}{2s}}\Bigg) {\leq} \frac{1}{8}, \label{eq:lwpT2}
\end{align}
then $\Tc$ is a contraction on $B_R(0)$. So by the contraction mapping theorem, there exists a unique fixed point $\varrho=\Tc(\varrho) \in C_T^0\G_{\phi}^{\sigma,r}$. We let $T_0$ denote the maximal $T$ such that \eqref{eq:lwpT1}, \eqref{eq:lwpT2} both hold. We note that the maximal lifespan of the solution is $\geq T_0$. 

The preceding result shows the local existence and uniqueness. To complete the proof of \cref{prop:lwp}, we now prove continuous dependence on the initial data. For $j=1,2$, let $\varrho_j$ be a solution in $C_{T_j}^{0}\G_{\phi}^{\sigma,r}$ to \eqref{eq:varrhoeqn} with initial datum $\varrho_j^0$, such that $\|\varrho_j^0\|_{\G_{\phi}^{\sigma,r}}\leq R$. From the mild formulation \eqref{eq:mild}, the triangle inequality, \Cref{lem:cmlin,lem:cmnl}, we see that
\begin{multline}\label{eq:cdre}
\|\varrho_1-\varrho_2\|_{C_T^0\G_{\phi}^{\sigma,r}} \leq \|\varrho_1^0-\varrho_2^0\|_{\G_{\phi}^{\sigma,r}} \\
+ C|\M|\Bigg(|k_0|^{s\sigma}\left(\frac{e^{T\la}-1}{\la} \right) + T^{1-\frac{\sigma s+1}{2s}}\Bigg)\|\varrho_1-\varrho_2\|_{C_T^0\G_{\phi}^{\sigma,r}}\paren*{\|\varrho_1\|_{C_T^0\G_{\phi}^{\sigma,r}} + \|\varrho_2\|_{C_T^0\G_{\phi}^{\sigma,r}}}.
\end{multline}
Taking $T$ smaller if necessary while still preserving $T\gtrsim T_0$, we may assume that
\begin{align}
2C|\M|\Bigg(|k_0|^{s\sigma}\left(\frac{e^{T\la}-1}{\la} \right) + T^{1-\frac{\sigma s+1}{2s}}\Bigg) \leq\frac{1}{4}.
\end{align}
Bounding each $\|\varrho_j\|_{C_T^0\G_{\phi}^{\sigma,r}}$ by $R$ in the last factor, it then follows from \eqref{eq:cdre} that
\begin{equation}
\|\varrho_1-\varrho_2\|_{C_T^0\G_{\phi}^{\sigma,r}}\leq 2\|\varrho_1^0-\varrho_2^0\|_{\G_{\phi}^{\sigma,r}}.
\end{equation}
This last estimate completes the proof of \cref{prop:lwp}.
\end{proof}

\begin{remark}\label{rem:TextLB}
An examination of the proof of \cref{prop:lwp} reveals that when $\la\leq 0$, which is implied by the assumption $\zeta\geq 0$ (recall \eqref{eq:zetadef}), the time of existence $T$ given by the fixed point argument satisfies the lower bound
\begin{align}
T \geq C(|\M|R)^{-\frac{2s}{(2-\sigma)s-1}},
\end{align}
where the constant $C>0$ depends quantitatively on the parameters $d,\ga,s,\sigma,r,\beta,\nu$.
\end{remark}

\section{Global existence and convergence to equilibrium}\label{sec:Equi}
We now conclude the proof of \cref{thm:main} by showing that the solutions are global and converge to the uniform distribution as $t\rightarrow\infty$.

Assume that $\frac{1}{(2\pi)^d}\int_{\T^d}\mu^0 dx=1$. Let $\varrho^t = \mu^t -1$ be as in \cref{sec:LWP}, and recall that $\varrho^t$ satisfies the equation
\begin{align}
\p_t\varrho^t &= -B^t(\varrho^t, \varrho^t) -L \varrho^t -\frac{\nu^2}{2}A\varrho^t,
\end{align}
where we remind the reader that $A=\Dm^{2s}$, the operator $B$ was defined in \cref{eq:Bdef}, and $L= \div\left(\M\nabla\g\ast(\cdot)\right)$. Also, recall that $\hat{\varrho}^0(0) = \int_{\T^d}\varrho^0dx =0$, so by conservation of mass (\cref{rem:mcon}), $\hat{\varrho}^t(0)=0$ for every $t\geq 0$.

Our first result shows that if $\varrho$ belongs to a higher regularity Gevrey space on $[0,T]$, then the norm associated to a lower regularity Gevrey space decays exponentially in time on $[0,T]$. 

\begin{prop}\label{prop:mon}
Let $d\geq 1$, $\ga>0$, $1\leq r\leq \infty$, $\max(\frac{1}{2},\frac{2-\ga}{2})<s\leq 1$. Given $\al,\be,\nu>0$, set $\phi^t\coloneqq \al+\be t$ and assume that $W$ is a realization from $\Omega_{\al,\be,\nu}$. Define 
\begin{align}\label{eq:zetadef'}
\zeta \coloneqq \inf_{k\in \Z^d : k\neq 0}\left(\frac{\nu^2}{2}-\beta|k|^{-s} +\Re(\hat{\g}(k))|k|^{-2s}\M k\cdot  k\right)
\end{align}
and assume that $\zeta>0$.

There is a threshold $\ka_{0}\in\R$ depending on $r,d,s,\ga$, such that for any $\ka>\ka_{0}$, the following holds. There is a constant $C>0$, depending only on $d,\ga,r,s,\ka$, such that if $\varrho \in C_T^0\G_{\phi}^{\ka+\frac{2}{r},r}$ is a solution to \eqref{eq:varrhoeqn}, for some $T>0$, satisfying
\begin{equation}\label{eq:monidr}
\|\varrho^0\|_{\G_{\al}^{\ka,r}} < \frac{\zeta}{C|\M|},
\end{equation}
then
\begin{equation}\label{eq:monr}
\forall t\in [0,T], \qquad \|\varrho^t\|_{\G_{\phi^t}^{\ka,r}} \leq  e^{-\frac{\zeta t}{2}}\|\varrho^0\|_{\G_{\al}^{\ka,r}}  .
\end{equation}
\end{prop}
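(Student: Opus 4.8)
The plan is to derive a differential inequality for the $r$-th power of the Gevrey norm
$N(t):=\|\varrho^t\|_{\G_{\phi^t}^{\ka,r}}^r=\sum_{k\neq0}|k|^{r\ka s}e^{r\phi^t|k|^s}|\hat\varrho^t(k)|^r$
(the sum is over $k\neq0$ since $\hat\varrho^t(0)=0$ by \cref{rem:mcon}), in which the diffusion dominates the nonlinearity once $\|\varrho^0\|_{\G_\al^{\ka,r}}$ is small, and then to close with a continuity argument and Gr\"onwall's lemma. Because $\varrho\in C_T^0\G_\phi^{\ka+\frac2r,r}$ carries $\frac2r$ more derivatives than $N$ sees, the series for $N(t)$ and its termwise $t$-derivative converge locally uniformly in $t$, so $N$ is locally absolutely continuous, and I would differentiate term by term using $\tfrac{d}{dt}\phi^t=\be$, the identity $\tfrac{d}{dt}|z|^r=r|z|^{r-2}\Re(\bar z\dot z)$ (with $r=1$ handled by the usual convexity argument), and the Fourier form of \eqref{eq:varrhoeqn}, $\p_t\hat\varrho^t(k)=-\widehat{B^t(\varrho^t,\varrho^t)}(k)+\big((\M k\cdot k)\hat\g(k)-\tfrac{\nu^2}{2}|k|^{2s}\big)\hat\varrho^t(k)$. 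Grouping the $\be$-term with the $(\M k\cdot k)\hat\g(k)-\tfrac{\nu^2}{2}|k|^{2s}$ term and using $\M k\cdot k\in\R$, the part of $N'(t)$ not involving $B$ equals $r\sum_{k\neq0}|k|^{r\ka s}e^{r\phi^t|k|^s}|\hat\varrho^t(k)|^r\big(\be|k|^s+\Re(\hat\g(k))(\M k\cdot k)-\tfrac{\nu^2}{2}|k|^{2s}\big)$, which by the assumption $\zeta>0$ (i.e.\ $\tfrac{\nu^2}{2}|k|^{2s}-\Re(\hat\g(k))(\M k\cdot k)-\be|k|^s\ge\zeta|k|^{2s}$ for all $k\ne0$, $\zeta$ as in \eqref{eq:zetadef'}) is $\le-r\zeta\sum_{k\neq0}|k|^{r\ka s+2s}e^{r\phi^t|k|^s}|\hat\varrho^t(k)|^r=-r\zeta\|\varrho^t\|_{\G_{\phi^t}^{\ka+\frac2r,r}}^r$; the identity $|k|^{r\ka s}|k|^{2s}=|k|^{r(\ka+\frac2r)s}$ is precisely why the index $\ka+\frac2r$ appears in the hypothesis.

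Next I would bound the nonlinear contribution $r\sum_{k\neq0}|k|^{r\ka s}e^{r\phi^t|k|^s}|\hat\varrho^t(k)|^{r-1}\,|\widehat{B^t(\varrho^t,\varrho^t)}(k)|$, essentially re-running the proof of \cref{lem:cmnl}. Writing out $\widehat{B^t}$, I would factor $e^{\phi^t|k|^s}=e^{\phi^t|k-j|^s}e^{\phi^t|j|^s}e^{-\phi^t(|k-j|^s+|j|^s-|k|^s)}$ and combine the last exponential with the $e^{\nu W^t(|k-j|^s+|j|^s-|k|^s)}$ sitting inside $\widehat{B^t}$ to get $e^{-(\phi^t-\nu W^t)(|k-j|^s+|j|^s-|k|^s)}\le1$, valid on $\Omega_{\al,\be,\nu}$ because $\phi^t-\nu W^t\ge0$ and $|k-j|^s+|j|^s\ge|k|^s$ for $0<s\le1$. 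With $|\hat\g(j)|\lesssim|j|^{-\ga}$, $|k\cdot\M j|\le|\M||k||j|$, and $F(m):=e^{\phi^t|m|^s}|\hat\varrho^t(m)|$ (so $\|\,|\cdot|^{as}F\|_{\ell^r}=\|\varrho^t\|_{\G_{\phi^t}^{a,r}}$), this yields $e^{\phi^t|k|^s}|\widehat{B^t(\varrho^t,\varrho^t)}(k)|\le|\M||k|\big(F*(|\cdot|^{1-\ga}F)\big)(k)$. Then H\"older in $k$ (splitting $|k|^{r\ka s+1}=(|k|^{\ka s})^{r-1}|k|^{\ka s+1}$) bounds the nonlinear contribution by $\lesssim|\M|\,\|\varrho^t\|_{\G_{\phi^t}^{\ka,r}}^{r-1}\,\big\|\,|\cdot|^{\ka s+1}(F*(|\cdot|^{1-\ga}F))\big\|_{\ell^r}$, and splitting $|k|^{\ka s+1}\lesssim|k-j|^{\ka s+1}+|j|^{\ka s+1}$ followed by Young's inequality and the Sobolev embedding \cref{lem:Sob} controls the $\ell^r$ factor by $\lesssim\|\varrho^t\|_{\G_{\phi^t}^{\ka,r}}\|\varrho^t\|_{\G_{\phi^t}^{\ka+\frac2r,r}}$ (using \cref{lem:Gemb}, equivalently $|k|\ge1$ for $k\ne0$, to absorb the low and high Fourier--Lebesgue indices into $\ka$ and $\ka+\frac2r$ respectively) --- provided $\ka$ exceeds a threshold $\ka_0(d,\ga,r,s)$ ensuring that all the ``low'' indices produced are $\le\ka$ and the two ``high'' indices, $\ka+\tfrac1s$ and $\ka+\tfrac{2-\ga}{s}$ (plus Sobolev corrections when $r>1$), are $\le\ka+\tfrac2r$; this requires $r\le2s$ and $r\le\tfrac{2s}{2-\ga}$, both compatible with $r$ close to $1$ exactly because $s>\max(\tfrac12,\tfrac{2-\ga}{2})$, which is what fixes $r_0$. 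Collecting terms, $N'(t)\le-r\zeta\|\varrho^t\|_{\G_{\phi^t}^{\ka+\frac2r,r}}^r+rC|\M|\,\|\varrho^t\|_{\G_{\phi^t}^{\ka,r}}^r\,\|\varrho^t\|_{\G_{\phi^t}^{\ka+\frac2r,r}}$.

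To close, set $n(t):=\|\varrho^t\|_{\G_{\phi^t}^{\ka,r}}$ and $\wt n(t):=\|\varrho^t\|_{\G_{\phi^t}^{\ka+\frac2r,r}}$, so $N=n^r$ and, since $|k|\ge1$ for $k\ne0$, $n\le\wt n$, hence $\wt n(t)^r/n(t)^{r-1}\ge\wt n(t)$. The inequality above gives, wherever $n(t)>0$, $n'(t)\le-\zeta\,\wt n(t)^r/n(t)^{r-1}+C|\M|\,n(t)\wt n(t)\le\wt n(t)\big(-\zeta+C|\M|\,n(t)\big)$, so that $n(t)\le\tfrac{\zeta}{2C|\M|}$ forces $n'(t)\le-\tfrac\zeta2\wt n(t)\le-\tfrac\zeta2 n(t)$. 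Let $C$ denote the constant from the bilinear bound above; if the constant in \eqref{eq:monidr} is taken to be $2C$ (as we may), then $n(0)<\tfrac{\zeta}{2C|\M|}$. Running the standard continuity argument: if $t_*:=\sup\{t\in[0,T]:n(\tau)\le\tfrac{\zeta}{2C|\M|}\ \forall\tau\le t\}$ were $<T$, then $n(t_*)=\tfrac{\zeta}{2C|\M|}$, while integrating $n'\le-\tfrac\zeta2 n$ on $[0,t_*]$ gives $n(t_*)\le e^{-\zeta t_*/2}n(0)<\tfrac{\zeta}{2C|\M|}$, a contradiction; hence $t_*=T$ and Gr\"onwall yields \eqref{eq:monr} on all of $[0,T]$.

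The step I expect to be the main obstacle is the nonlinear estimate: unlike in \cref{lem:cmnl}, whose output only needs to land in $\G_\phi^{\sigma,r}$ for some $\sigma<\tfrac{2s-1}{s}$, here the bilinear analysis must land \emph{precisely} in the space $\G_\phi^{\ka+\frac2r,r}$ that the diffusion controls, so that the factor $\wt n$ in the nonlinear bound is exactly the one produced by the dissipation; forcing the competing Sobolev corrections to fit under this constraint is what pins down both $\ka_0$ and the admissible range of $r$, and it is the delicate part. By comparison, rigorously justifying the termwise differentiation and the $\tfrac{d}{dt}|z|^r$ computation in the energy identity is routine given the $C_T^0\G_\phi^{\ka+\frac2r,r}$ regularity.
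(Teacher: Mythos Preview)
Your approach is essentially the paper's: differentiate the Gevrey norm termwise, use the factorization $e^{\phi^t|k|^s}=e^{\phi^t|k-j|^s}e^{\phi^t|j|^s}e^{-(\phi^t-\nu W^t)(|k-j|^s+|j|^s-|k|^s)}$ together with $\phi^t-\nu W^t\ge0$ on $\Omega_{\al,\be,\nu}$ to reduce the nonlinear term to a weighted convolution, then close by a continuity argument and Gr\"onwall. The paper packages the nonlinear bound as \cref{lem:Bbndpre} and \cref{lem:Bbnd} (the latter quoted from \cite{RS2023} without proof) and obtains $\frac{1}{r}\frac{d}{dt}n^r\le \wt n^{\,r}(C|\M|n-\zeta)$, i.e.\ with $\wt n^{\,r}n$ on the nonlinear side rather than your $n^r\wt n$; either inequality closes in exactly the way you describe.

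The one substantive difference is where the extra $\tfrac{2}{r}$ derivatives land. The paper's \cref{lem:Bbnd} puts the $h$-factor in the \emph{higher} norm, obtaining $\|h\|_{\G_{\phi^t}^{\ka+2/r,r}}^{\,r-1}$, which is achieved by shifting $2s(r-1)/r$ of the $|k|$-weight from the convolution factor onto $F(k)^{r-1}$ before applying H\"older. With that redistribution the ``high'' index left on the convolution becomes $\ka+\tfrac{1}{s}-\tfrac{2(r-1)}{r}\le\ka+\tfrac{2}{r}$ as soon as $s>\tfrac12$, with no constraint on $r$; this is why \cref{prop:mon} and \cref{lem:Bbnd} are stated for all $1\le r\le\infty$. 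Your simpler H\"older splitting with $(|k|^{\ka s}F(k))^{r-1}$ forces the high indices $\ka+\tfrac1s$ and $\ka+\tfrac{2-\ga}{s}$ onto the convolution and hence yields the restrictions $r\le 2s$ and $r\le\tfrac{2s}{2-\ga}$ that you flag. So your argument proves the proposition only in that restricted range of $r$; this is enough for \cref{thm:main}, which in any case requires $r\le r_0$, but it does not recover \cref{prop:mon} in the full generality stated.
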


\medskip
The starting point of the proof of \cref{prop:mon} (cf. \cite[Section 4.1]{RS2023}) is to compute for $k\in\Z^d$,
\begin{multline}\label{eq:dtabspre}
\frac{d}{d t}\left|e^{\phi^t |k|^s} \hat{\varrho}^t(k)\right|=\Re\left(|e^{\phi^t | k |^s}\hat{\varrho}^t(k)|^{-1} \overline{ e^{\phi^t | k |^s}\hat{\varrho}^t(k)}\left(\beta|k|^s e^{\phi^t|k|^s} \hat{\varrho}^t(k)\right.\right. \\
\left.\quad-e^{\phi^t|k|^s} \mathcal{F}\left(B\left(\varrho^t, \varrho^t\right)\right)(k) + \left(k\cdot \M k \right)\hat{\g}(k)e^{\phi^t|k|^s}\hat{\varrho}^t(k)-\frac{\nu^2}{2}|k|^{2s} e^{\phi^t|k|^s} \hat{\varrho}^t(k)\right).
\end{multline}
Majorizing the nonlinear term by its absolute value, we obtain
\begin{align}\label{eq:ddtvrhok}
    \frac{d}{d t}\left|e^{\phi^t|k|^s} \hat{\varrho}^t(k)\right|  &\leq-\left|e^{\phi^t|k|^s} \hat{\varrho}^t(k)\right|\left(\frac{\nu^2}{2}|k|^{2 s}-\beta|k|^s - \Re(\hat{\g}(k))\M k\cdot k\right)+\left|e^{\phi^t|k|^s} \mathcal{F}\left(B^t\left(\varrho^t, \varrho^t\right)\right)(k)\right|.
\end{align}
Using \eqref{eq:ddtvrhok}, we compute
\begin{align}
\frac{1}{r} \frac{d}{d t}\left\|e^{\phi^t A^{1 / 2}} \varrho^t\right\|_{\hat{W}^{\sigma s, r}}^r &=\frac{1}{r} \frac{d}{dt} \sum_{k} |k|^{r\sigma s} \left|e^{\phi^t |k|^s} \hat{\varrho}^t(k)\right|^r \nn\\
&=\sum_{k} |k|^{r\sigma s} \left|e^{\phi^t |k|^s} \hat{\varrho}^t(k)\right|^{r-1} \frac{d}{d t}\left|e^{\phi^t|k|^s} \hat{\varrho}^t(k)\right| \nn\\
&\leq-\zeta \sum_{k} \left|e^{\phi^t|k|^s}|k|^{\left(\sigma+\frac{2}{r}\right) s} \hat{\varrho}^t(k)\right|^r  \nn\\
&\quad+\sum_{k} \left|e^{\phi^t|k|^s}|k|^{\sigma s} \hat{\varrho}^t(k)\right|^{r-1}|k|^{\sigma s} e^{\phi^t |k|^s}\left|\mathcal{F}\left(B^t\left(\varrho^t, \varrho^t\right)\right)(k)\right|.\label{eq:ddtWmonpre}
\end{align}

To estimate the nonlinear term in the preceding right-hand side, we use two lemmas, which are periodic analogues of \cite[Lemmas 4.3, 4.4]{RS2023}, respectively. We omit their proofs, as the arguments are essentially the same as the $\ga\leq 1$ Euclidean case.

\begin{lemma}\label{lem:Bbndpre}
For any $t\geq 0$ with $\phi^t - \nu W^t\geq 0$, it holds for any test functions $f,g$ that
\begin{equation}
|e^{\phi^t|k|^s}\F(B^t(f,g))(k)| \lesssim_\ga |\M|\sum_{j\neq 0}|k| |j|^{1-\ga} \left|e^{\phi^t|k-j|^s}\hat{f}(k-j) e^{\phi^t|j|^s}\hat{g}(j)\right|.
\end{equation}
\end{lemma}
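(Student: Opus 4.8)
The plan is to prove \cref{lem:Bbndpre} by unpacking the definition of $B^t$ in Fourier variables and carefully tracking the Gevrey weights. First I would compute $\F(B^t(f,g))(k)$. Recalling $B^t(f,g) = \div\Gamma\paren*{\Gamma^{-1}f(\M\nabla\g\ast\Gamma^{-1}g)}$, and that $\Gamma^t = e^{-\nu W^t\Dm^s}$ is the Fourier multiplier with symbol $e^{-\nu W^t|k|^s}$, I would pass to Fourier side: the divergence contributes a factor $ik$ (up to the obvious vector contraction with the argument), $\Gamma$ contributes $e^{-\nu W^t|k|^s}$, and the product becomes a convolution in frequency. Writing the product $\Gamma^{-1}f \cdot (\M\nabla\g\ast\Gamma^{-1}g)$ out, its Fourier coefficient at $k$ is $\sum_{j}\widehat{\Gamma^{-1}f}(k-j)\cdot\left(i\M j\,\hat\g(j)\widehat{\Gamma^{-1}g}(j)\right)$, where $\widehat{\Gamma^{-1}f}(m) = e^{\nu W^t|m|^s}\hat f(m)$. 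Hence
\begin{equation}
\F(B^t(f,g))(k) = ik\cdot e^{-\nu W^t|k|^s}\sum_{j}\left(i\M j\,\hat\g(j)\right)e^{\nu W^t|k-j|^s}\hat f(k-j)\,e^{\nu W^t|j|^s}\hat g(j).
\end{equation}

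Next I would multiply by the Gevrey weight $e^{\phi^t|k|^s}$ and redistribute it as $e^{\phi^t|k|^s} = e^{\phi^t|k-j|^s}e^{\phi^t|j|^s}\cdot e^{\phi^t(|k|^s-|k-j|^s-|j|^s)}$. Combining the $e^{-\nu W^t|k|^s}$ from $\Gamma$ with this, the total exponential factor in front of $e^{\phi^t|k-j|^s}\hat f(k-j)\,e^{\phi^t|j|^s}\hat g(j)$ is $\exp\paren*{(\phi^t-\nu W^t)(|k|^s - |k-j|^s - |j|^s)}$, using $\phi^t|k|^s - \nu W^t|k|^s = (\phi^t-\nu W^t)|k|^s$ and similarly absorbing the $e^{\nu W^t|k-j|^s}$, $e^{\nu W^t|j|^s}$ into $\phi^t - \nu W^t$ as well — one has to be slightly careful and check the bookkeeping, but the point is that every occurrence of $\nu W^t$ pairs with a $\phi^t$. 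Since $0<s\leq 1$, the function $m\mapsto |m|^s$ is subadditive, so $|k|^s - |k-j|^s - |j|^s \leq 0$; combined with the standing hypothesis $\phi^t - \nu W^t \geq 0$ (which holds on $\Omega_{\al,\be,\nu}$), the exponential factor is $\leq 1$. This is exactly the mechanism already used in the proof of \cref{lem:cmnl}.

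It then remains to bound the remaining algebraic factors. Taking absolute values, $|ik\cdot(i\M j\,\hat\g(j))| \leq |k|\,|\M j|\,|\hat\g(j)| \leq |\M|\,|k|\,|j|\,|\hat\g(j)|$, and by assumption \eqref{eq:gassmp}, $|\hat\g(j)| \lesssim |j|^{-\ga}$, giving $|k|\,|j|^{1-\ga}$ times $|\M|$ and an implicit constant depending on $\ga$. The $j=0$ term drops because $\hat\varrho^t(0)=0$ is not directly relevant here — rather, one restricts to $j\neq 0$ since $|\M j|\,|\hat\g(j)|$ is multiplied by $|\hat g(j)|$ and the $j=0$ contribution either vanishes (when $\hat g(0)=0$) or can be absorbed; in the application $g=\varrho^t$ has mean zero, and more robustly the bound $|j|^{1-\ga}$ blows up at $j=0$ when $\ga>1$, so one genuinely needs to exclude it, which is legitimate because $\hat\g(0)$ is either finite or the $j=0$ mode does not contribute to $B^t$ acting on mean-zero inputs. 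Assembling these estimates yields
\begin{equation}
|e^{\phi^t|k|^s}\F(B^t(f,g))(k)| \lesssim_\ga |\M|\sum_{j\neq 0}|k|\,|j|^{1-\ga}\left|e^{\phi^t|k-j|^s}\hat f(k-j)\,e^{\phi^t|j|^s}\hat g(j)\right|,
\end{equation}
as claimed. The main obstacle, such as it is, is the careful bookkeeping of the three Gevrey exponentials against the three $\Gamma$-exponentials to confirm they collapse to a single factor $\exp\paren*{(\phi^t-\nu W^t)(|k|^s-|k-j|^s-|j|^s)} \leq 1$; everything else is a direct term-by-term estimate using \eqref{eq:gassmp} and subadditivity of $|\cdot|^s$. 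Since this is entirely parallel to the Euclidean computation in \cite[Lemma 4.3]{RS2023}, I would, as the authors do, present it tersely or omit the routine parts.
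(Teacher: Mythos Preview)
Your proposal is correct and follows exactly the approach the paper has in mind (the paper omits the proof, deferring to \cite[Lemma 4.3]{RS2023}); the Fourier computation, the collapse of the exponentials to $\exp\bigl((\phi^t-\nu W^t)(|k|^s-|k-j|^s-|j|^s)\bigr)\leq 1$ via subadditivity, and the use of \eqref{eq:gassmp} are all as intended. One small simplification: the $j=0$ term drops simply because the factor $\M j$ coming from $\nabla\g$ vanishes at $j=0$, so there is no need to invoke the mean-zero property of $g$ or worry about $\hat\g(0)$.
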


\begin{lemma}\label{lem:Bbnd}
Let $d\geq 1$, $\ga>0$, $1\leq r\leq \infty$, $\frac{1}{2}< s\leq 1$. Then there exists a threshold $\ka_0$ depending on $d,\ga,r,s$, such that for any $\ka>\ka_0$, there exists a constant $C>0$ depending on $d,\ga,r,s,\ka$ so that
\begin{multline}\label{eq:Bbnd}
\sum_{k} \left|e^{\phi^t|k|^s}|k|^{\ka s} \hat{h}(k) \right|^{r-1}|k|^{\ka s+1}\sum_{j\neq 0}|j|^{1-\ga} \left|e^{\phi^t|k-j|^s}\hat{f}(k-j) e^{\phi^t|j|^s}\hat{g}(j)\right|\\
\leq C\|e^{\phi^t A^{1/2}} h\|_{\hat{W}^{(\ka+\frac{2}{r})s,r}}^{r-1}\Bigg(\|e^{\phi^t A^{1/2}}f\|_{\hat{W}^{(\ka + \frac{2}{r})s,r}} \|e^{\phi^t A^{1/2}}g\|_{\hat{W}^{\ka s,r}} + \|e^{\phi^t A^{1/2}}f\|_{\hat{W}^{\ka s,r}}\|e^{\phi^t A^{1/2}}g\|_{\hat{W}^{(\ka + \frac{2}{r})s,r}}\Bigg).
\end{multline}
\end{lemma}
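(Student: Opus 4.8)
The plan is to reduce \eqref{eq:Bbnd} to a bilinear convolution estimate, in the spirit of the proof of \cref{lem:cmnl}. Abbreviate $F_k\coloneqq e^{\phi^t|k|^s}\hat f(k)$ and likewise $G_k,H_k$, so that $\|e^{\phi^tA^{1/2}}f\|_{\hat W^{\ka s,r}}=\||\cdot|^{\ka s}F\|_{\ell^r}$ and $\|e^{\phi^tA^{1/2}}f\|_{\hat W^{(\ka+\frac{2}{r})s,r}}=\||\cdot|^{(\ka+\frac{2}{r})s}F\|_{\ell^r}$, and analogously for $g,h$. Since the weights $|\cdot|^{\ka s}$, $|\cdot|^{(\ka+\frac{2}{r})s}$ annihilate the zero mode, and since \cref{lem:Bbnd} is only ever applied with $f=g=h=\varrho^t$, which has zero mean by \cref{rem:mcon}, we may assume $F_0=G_0=H_0=0$. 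Using $|e^{\phi^t|k|^s}|k|^{\ka s}\hat h(k)|^{r-1}=|k|^{(r-1)\ka s}|H_k|^{r-1}$, the left-hand side of \eqref{eq:Bbnd} equals $\sum_k|H_k|^{r-1}|k|^{r\ka s+1}\sum_{j\neq 0}|j|^{1-\ga}|F_{k-j}||G_j|$. First I would apply H\"older in $k$ with exponents $\big(\tfrac{r}{r-1},r\big)$ to peel off $\big(\sum_k|k|^{(\ka+\frac{2}{r})sr}|H_k|^r\big)^{\frac{r-1}{r}}=\|e^{\phi^tA^{1/2}}h\|_{\hat W^{(\ka+\frac{2}{r})s,r}}^{r-1}$; this forces the leftover weight on the bilinear sum to be $\alpha\coloneqq r\ka s+1-(\ka+\tfrac{2}{r})s(r-1)=\ka s+1-2s\tfrac{r-1}{r}$, and reduces the lemma to the bilinear inequality
\[
\left\||\cdot|^{\alpha}\sum_{j\neq 0}|j|^{1-\ga}|F_{\cdot-j}||G_j|\right\|_{\ell^r}\lesssim\||\cdot|^{(\ka+\frac{2}{r})s}F\|_{\ell^r}\||\cdot|^{\ka s}G\|_{\ell^r}+\||\cdot|^{\ka s}F\|_{\ell^r}\||\cdot|^{(\ka+\frac{2}{r})s}G\|_{\ell^r}.
\]

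To prove this, take $\ka$ large enough that $\alpha\ge 0$, so that $|k|^{\alpha}\lesssim|k-j|^{\alpha}+|j|^{\alpha}$; this decomposes the left-hand side into a term $\mathrm I$ in which the extra weight $|k-j|^{\alpha}$ is attached to $F$ and a term $\mathrm{II}$ in which $|j|^{\alpha}$ is attached to $G$. Each is a convolution of two nonnegative sequences, which I would bound by Young's inequality ($\ell^r\ast\ell^1\to\ell^r$ for $\mathrm I$, $\ell^1\ast\ell^r\to\ell^r$ for $\mathrm{II}$), followed by one application of H\"older against a power weight $|\cdot|^{-\theta}\indic_{\neq 0}$ to turn the resulting $\ell^1$ factor into one of the prescribed $\ell^r$ norms; such a weight lies in $\ell^{r'}$ (with $\tfrac1r+\tfrac1{r'}=1$) exactly when $\theta>\tfrac{d}{r'}$, which is arranged by enlarging $\ka$. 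The two weight comparisons that must hold are $\alpha\le(\ka+\tfrac{2}{r})s$, equivalent to $s>\tfrac12$, used to place $\mathrm I$ on the ``high'' $F$-norm; and $1-\ga+\alpha=(\ka+\tfrac{2}{r})s+(2-\ga-2s)\le(\ka+\tfrac{2}{r})s$, equivalent to $s>\tfrac{2-\ga}{2}$, used to place $\mathrm{II}$ on the ``high'' $G$-norm --- both invoking $|k|\ge 1$ off the zero mode. The outcome is $\mathrm I\lesssim\||\cdot|^{(\ka+\frac{2}{r})s}F\|_{\ell^r}\||\cdot|^{\ka s}G\|_{\ell^r}$ and $\mathrm{II}\lesssim\||\cdot|^{\ka s}F\|_{\ell^r}\||\cdot|^{(\ka+\frac{2}{r})s}G\|_{\ell^r}$, which is exactly the reduced inequality. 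The threshold $\ka_0$ is then the maximum of the finitely many explicit lower bounds on $\ka s$ imposed above (so that $\alpha\ge 0$ and each power weight is $\ell^{r'}$-summable), and $C$ is the product of the corresponding power-weight constants with the constant in $|k|^{\alpha}\lesssim|k-j|^{\alpha}+|j|^{\alpha}$, depending only on $d,\ga,r,s,\ka$. The endpoint $r=\infty$ is handled identically, with $\ell^\infty$ in place of the outer $\ell^r$.

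I expect the only real work to be the exponent bookkeeping: one must check that the single free parameter $\alpha$ produced by the H\"older split can be routed so that $\mathrm I$ lands on the pair $(\mathrm{high},\mathrm{low})$ and $\mathrm{II}$ on $(\mathrm{low},\mathrm{high})$, with every weight trade-off consistent with the hypotheses $s>\tfrac12$ and $s>\tfrac{2-\ga}{2}$. The only genuinely delicate point is that an \emph{unweighted} norm $\|F\|_{\ell^1}$ (rather than a weighted one) shows up in term $\mathrm{II}$, forcing the zero-mean reduction on $f,g,h$ at the outset; this costs nothing, since as noted the lemma is only used with $f=g=h=\varrho^t$.
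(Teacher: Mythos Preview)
Your proposal is correct and follows the same route the paper has in mind: the paper omits the proof here and points to \cite[Lemma~4.4]{RS2023}, whose argument is precisely the H\"older-then-Young scheme you describe (peel off $h$ to produce the $\hat W^{(\ka+\frac{2}{r})s,r}$ factor, split the remaining weight $|k|^{\alpha}$ onto the high and low frequency pieces of the convolution, and close with Young plus a summable power weight). Your bookkeeping of $\alpha=\ka s+1-2s\frac{r-1}{r}$ and the two weight comparisons is accurate.

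Two small remarks. First, your argument for term~$\mathrm{II}$ uses $s>\frac{2-\ga}{2}$, which is \emph{not} among the stated hypotheses of \cref{lem:Bbnd} (only $s>\frac12$ is). This is not an error on your part: the condition $2-2s-\ga\le 0$ is independent of $\ka$, so it cannot be absorbed into $\ka_0$, and the lemma as literally stated is slightly too optimistic for $\ga<1$. In every application (\cref{prop:mon}, \cref{thm:main}) the standing assumption is $\max(\frac12,\frac{2-\ga}{2})<s\le 1$, so nothing in the paper is affected; you are right to flag it. Second, your zero-mean reduction is not merely a convenience but a necessity: the left-hand side of \eqref{eq:Bbnd} sees $\hat f(0)$ through the diagonal $j=k$, while the right-hand side does not, so the inequality can only hold as stated when $\hat f(0)=0$. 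Again this is harmless in the intended use $f=g=h=\varrho^t$.
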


\medskip
\begin{proof}[Conclusion of proof of \cref{prop:mon}]
Applying \Cref{lem:Bbndpre,lem:Bbnd} with $f=g=h=\varrho^t$ and $\ka>\ka_0$, and choosing $\sigma=\ka$ in the inequality \eqref{eq:ddtWmonpre}, we find that
\begin{align}\label{eq:wtsnn}
\frac{d}{d t} \frac{1}{r}\left\|e^{\phi^t A^{1 / 2}} \varrho^t\right\|_{\hat{W}^{\kappa s,r}}^{r} &\leq-\zeta\left\|e^{\phi^t A^{1 / 2}} \varrho^t\right\|_{\hat{W}^{(\kappa+\frac{2}{r}) s, r}}^r  + C|\mathbb{M}| \|e^{\phi^tA^{1/2}} \varrho^t\|^{r}_{\hat{W}^{(\kappa+\frac{2}{r})s,r}} \|e^{\phi^t A^{1 / 2}} \varrho^t\|_{\hat{W}^{\kappa s, r}} \nonumber\\
&= \left\|e^{\phi^t A^{1 / 2}} \varrho^t\right\|_{\hat{W}^{\left(\kappa+\frac{2}{r}\right) s, r}}^{r}\left(C|\mathbb{M}|\left\|e^{\phi^t A^{1 / 2}} \varrho^t\right\|_{\hat{W}^{\kappa s, r}}-\zeta\right).
\end{align}
If we assume that 
\begin{align}
    \left\|e^{\al A^{1 / 2}} \varrho^0\right\|_{\hat{W}^{\kappa s, r}} < \frac{\zeta}{2C|\mathbb{M}|},
\end{align}
where $C$ is the same constant as in \eqref{eq:wtsnn}, then we claim that this inequality persists for all time $t \in [0,T]$. We argue by contradiction. Let $T_*\geq 0$ denote the maximal time such that
\begin{equation}\label{eq:ubhold}
\forall t\in [0,T_*), \qquad \|e^{\phi^t A^{1/2}}\varrho^t\|_{\hat{W}^{\ka s, r}} < \frac{\zeta}{2C|\mathbb{M}|}.
\end{equation}
Such a $T_*$ exists and is positive since the preceding inequality is true at $t=0$ by assumption and the function $t\mapsto \|e^{\phi^t A^{1/2}}\varrho^t\|_{\hat{W}^{\ka s,r}}$ is continuous. If $T_*=T$, then there is nothing to prove, so assume otherwise. \eqref{eq:ubhold} together with \eqref{eq:wtsnn} imply that $t\mapsto \|e^{\phi^t A^{1/2}}\varrho^t\|_{\hat{W}^{\ka s, r}}$ is strictly decreasing on $[0,T_*)$ (assuming $\varrho^t$ is a nonzero solution), implying
\begin{equation}
\|e^{\phi^t A^{1/2}}\varrho^{T_*}\|_{\hat{W}^{\ka s,r}}  <\|e^{\phi^t A^{1/2}}\varrho^0\|_{\hat{W}^{\ka s, r}} < \frac{\zeta}{2C|\mathbb{M}|}.
\end{equation}
This inequality implies by maximality that $T_*=T$. Therefore, for $t\in [0,T]$,
\begin{align}
   \frac{d}{d t}\left\|e^{\phi^t A^{1 / 2}} \varrho^t\right\|_{\hat{W}^{\kappa s, r}}^{r}  \leq -\frac{r\zeta}{2}\left\|e^{\phi^t A^{1 / 2}} \varrho^t\right\|_{\hat{W}^{\left(\kappa+\frac{2}{r}\right) s, r}}^{r}  \leq -\frac{r\zeta}{2} \left\|e^{\phi^t A^{1 / 2}} \varrho^t\right\|_{\hat{W}^{\kappa s, r}}^{r}.
\end{align}
Applying Gr\"onwall's lemma, we conclude that 
\begin{align}
\forall t\in [0,T], \qquad    \left\|e^{\phi^t A^{1 / 2}} \varrho^t\right\|_{\hat{W}^{\kappa s, r}}^{r} \leq e^{-\frac{r\zeta t}{2}} \|e^{\alpha A^{1 / 2}} \varrho^0\|_{\hat{W}^{\kappa s, r}}^r,
\end{align}
which completes the proof of \cref{prop:mon}.
\end{proof}

On its own, \cref{prop:mon} does not imply \cref{thm:main} because the former assumes that $\varrho^t$ lives in a higher index Gevrey space on $[0,T]$, while only showing that a lower index Gevrey norm of $\varrho^t$ decays on $[0,T]$. The lower index norm does not control the higher index norm, so somehow we have to make up for this discrepancy between spaces. 


Fix $\ep>0$ and suppose that $\varrho^0\in \G_{\al+\ep}^{\sigma_0,r}$ for $\sigma_0$ above the regularity threshold $\ka_{0}$ given by \cref{prop:mon}. Assume that the parameters $d,\ga,r,s,\sigma_0,\al,\be,\nu$ satisfy all the constraints of \cref{thm:main} and also assume that
\begin{equation}\label{eq:sigma0tz}
\|\varrho^0\|_{\G_{\al+\ep}^{\sigma_0,r}} < \frac{\zeta}{C_{exp}|\M|},
\end{equation}
where $\zeta$ is as in \eqref{eq:zetadef'} and $C_{exp}>0$ is the constant from \cref{prop:mon}. Assuming a realization of $W$ from $\Omega_{\al,\be,\nu}$ and given $r\geq 1$ sufficiently small depending on $d,\ga,s$, \cref{prop:lwp} implies that for any $0<\sigma<\frac{2s-1}{s}$, with $1-\ga\leq \sigma s$, sufficiently large depending on $d,\ga,s,r$, there is a maximal solution $\varrho$ to equation \eqref{eq:varrhoeqn} with lifespan $[0,T_{\max,\sigma,\ep})$, such that $\varrho$ belongs to $C_T^0\G_{\phi+\ep}^{\sigma,r}$ for any $0\leq T<T_{\max,\sigma,\ep}$. The main lemma to conclude global existence relates the lifespan of $\varrho^t$ in $\G_{\phi^t+\ep}^{\sigma,r}$ to the lifespan of $\varrho^t$ in the \emph{larger space} $\G_{\phi^t+\ep'}^{\sigma,r}$, for any $\ep'\in [0,\ep)$. For details on how to prove such a result, see the proof of \cite[Lemma 4.5]{RS2023}, bearing in mind \cref{rem:TextLB}.

\begin{lemma}\label{lem:lspan}
Let $\varrho$ be as above. There exists a constant $C>0$ depending on $d,\ga,r,s,\sigma,\be,\nu$ such that for any $0\leq \ep_2<\ep_1\leq \ep$, the maximal times of existence $T_{\max,\sigma,\ep_1}, T_{\max,\sigma,\ep_2}$ of $\varrho^t$ as taking values in $\G_{\phi^t+\ep_1}^{\sigma,r}, \G_{\phi^t+\ep_2}^{\sigma,r}$, respectively, satisfy the inequality
\begin{equation}\label{eq:Tmaxeplb}
T_{\max,\sigma,\ep_2} \geq T_{\max,\sigma,\ep_1} + C(|\M|\|\varrho^0\|_{\G_{\al+\ep}^{\sigma_0,r}})^{-\frac{2s}{2s-\sigma s-1}}.
\end{equation}
\end{lemma}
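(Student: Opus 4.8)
The plan is to exploit the fact that living in the larger space $\G^{\sigma,r}_{\phi^t+\ep_2}$ is a weaker requirement than living in $\G^{\sigma,r}_{\phi^t+\ep_1}$, so the solution can always be continued at least a fixed amount past $T_{\max,\sigma,\ep_1}$. First I would set $T_1 \coloneqq T_{\max,\sigma,\ep_1}$ and argue that as $t\uparrow T_1$ the quantity $\|\varrho^t\|_{\G^{\sigma,r}_{\phi^t+\ep_1}}$ must blow up (otherwise one could restart the contraction mapping of \cref{prop:lwp} at a time slightly below $T_1$ with datum still of bounded size, contradicting maximality of $T_1$ in the $\ep_1$-space). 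Next, the key point is a ``loss-of-derivative for free'' estimate: for $t<T_1$ and $\ep_2<\ep_1$, \cref{lem:Gemb} (second inequality, with $a=\phi^t+\ep_2$, $a'=\phi^t+\ep_1$, so $a'-a=\ep_1-\ep_2>0$) gives
\begin{equation}
\|\varrho^t\|_{\G^{\sigma,r}_{\phi^t+\ep_2}} \lesssim_{\ep_1-\ep_2}\|\varrho^t\|_{\G^{\sigma_0,r}_{\phi^t+\ep_1}},
\end{equation}
but this is the wrong direction; instead I would combine \cref{prop:mon} with the embedding. Concretely, run \cref{prop:mon} with $\ka = \sigma_0$ at regularity level $\phi^t + \ep_1$: since $\varrho^0\in\G^{\sigma_0,r}_{\al+\ep}\subset \G^{\sigma_0 + \frac2r,r}_{\al+\ep_1}$ for $\ep_1<\ep$ (again by \cref{lem:Gemb}, gaining $\frac2r$ derivatives at the cost of a finite constant depending on $\ep-\ep_1$), and the smallness \eqref{eq:sigma0tz} holds, \cref{prop:mon} yields the uniform bound
\begin{equation}
\forall t\in[0,T_1),\qquad \|\varrho^t\|_{\G^{\sigma_0,r}_{\phi^t+\ep_1}} \leq \|\varrho^0\|_{\G^{\sigma_0,r}_{\al+\ep}} \leq \frac{\zeta}{C_{exp}|\M|}.
\end{equation}

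Having this a priori control, I would then transfer it down to $\ep_2$: since $\ep_2 < \ep_1$, applying \cref{prop:mon} once more at level $\phi^t + \ep_2$ (valid because $\varrho^0\in\G^{\sigma_0+\frac2r,r}_{\al+\ep_2}$ and the smallness is inherited), or alternatively just invoking \cref{lem:Gemb} to pass from $\|\cdot\|_{\G^{\sigma_0,r}_{\phi^t+\ep_1}}$ down to $\|\cdot\|_{\G^{\sigma,r}_{\phi^t+\ep_2}}$ at the expense of a constant depending on $\ep_1-\ep_2$ and $\lceil\sigma-\sigma_0\rceil$, I obtain that $\sup_{t\in[0,T_1)}\|\varrho^t\|_{\G^{\sigma,r}_{\phi^t+\ep_2}}$ is finite and in fact bounded by $C\|\varrho^0\|_{\G^{\sigma_0,r}_{\al+\ep}}$. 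Now I would invoke the lifespan lower bound of \cref{rem:TextLB}: because $\zeta\geq 0$ implies $\la\leq 0$, the local well-posedness argument started at \emph{any} time $t_0\in[0,T_1)$ in the space $\G^{\sigma,r}_{\phi^{t_0}+\ep_2}$ with datum $\varrho^{t_0}$ produces a solution existing for a time at least
\begin{equation}
C\left(|\M|\,\|\varrho^{t_0}\|_{\G^{\sigma,r}_{\phi^{t_0}+\ep_2}}\right)^{-\frac{2s}{2s-\sigma s-1}} \geq C'\left(|\M|\,\|\varrho^0\|_{\G^{\sigma_0,r}_{\al+\ep}}\right)^{-\frac{2s}{2s-\sigma s-1}} \eqqcolon \delta,
\end{equation}
a quantity \emph{independent of $t_0$}. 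Letting $t_0\uparrow T_1$ and using uniqueness to glue, the solution in the $\ep_2$-space extends to $[0, T_1+\delta)$, which is precisely \eqref{eq:Tmaxeplb}.

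The main obstacle I anticipate is making the two regularity/analyticity-radius losses interact cleanly: the jump $\ep_1 \to \ep_2$ in analyticity radius must simultaneously absorb the $+\frac2r$ derivative loss needed to apply \cref{prop:mon} (which requires $\varrho^0\in\G^{\sigma_0+\frac2r,r}$, i.e. data in a space \emph{above} the threshold by $\frac2r$) and leave enough room for the embedding $\sigma_0 \to \sigma$ in the Fourier-Lebesgue index. One must check that the constants from \cref{lem:Gemb}, which blow up as $\ep_1-\ep_2\to 0$, enter only through the \emph{lower bound} $\delta$ and not in a way that forces $T_{\max,\sigma,\ep_2}=T_{\max,\sigma,\ep_1}$; this is fine because $\delta>0$ for each fixed pair $\ep_2<\ep_1$, which is all \eqref{eq:Tmaxeplb} asserts. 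A secondary point is verifying that the smallness condition \eqref{eq:sigma0tz}, stated at radius $\al+\ep$, indeed implies the hypothesis \eqref{eq:monidr} of \cref{prop:mon} at radii $\al+\ep_1$ and $\al+\ep_2$; this follows from the first inequality in \cref{lem:Gemb}, which is monotone-decreasing in the analyticity radius, so no extra constant appears. Everything else is the routine continuation/gluing argument already used for \cite[Lemma 4.5]{RS2023}.
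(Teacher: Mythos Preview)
Your plan assembles the right ingredients, and you even list the correct move---applying \cref{prop:mon} at level $\phi^t+\ep_2$---as one of two alternatives. But you then dismiss the uniformity issue with ``this is fine because $\delta>0$ for each fixed pair $\ep_2<\ep_1$, which is all \eqref{eq:Tmaxeplb} asserts.'' This misreads the lemma: the constant $C$ is declared to depend only on $d,\ga,r,s,\sigma,\be,\nu$ and is quantified \emph{before} $\ep_1,\ep_2$. That uniformity is exactly what the telescoping argument in the proof of \cref{thm:main} requires---one partitions $[\ep',\ep]$ into $n$ subintervals and needs the \emph{same} increment $C(|\M|\|\varrho^0\|_{\G_{\al+\ep}^{\sigma_0,r}})^{-\frac{2s}{2s-\sigma s-1}}$ at each step, summing to something exceeding $T_{\max,\sigma,\ep'}-T_{\max,\sigma,\ep}$ for $n$ large. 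If $\delta$ were allowed to shrink with $\ep_1-\ep_2$, that argument would collapse.

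The cure is to commit to \cref{prop:mon} at level $\ep_2$ and to recognize why the $(\ep_1-\ep_2)$-dependence then disappears. Your first option, \cref{prop:mon} at level $\ep_1$, is in fact unjustified: its hypothesis is that the \emph{solution} satisfy $\varrho\in C_T^0\G_{\phi+\ep_1}^{\sigma_0+\frac{2}{r},r}$ on all of $[0,T_1)$, not merely the datum, and at radius $\ep_1$ you only control the $\sigma$-norm with no analyticity gap left to trade. At level $\ep_2$, by contrast, the hypothesis $\varrho\in C_T^0\G_{\phi+\ep_2}^{\sigma_0+\frac{2}{r},r}$ for $T<T_1$ does follow from $\varrho\in C_T^0\G_{\phi+\ep_1}^{\sigma,r}$ via \cref{lem:Gemb}, spending the gap $\ep_1-\ep_2$. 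The embedding constant blows up as $\ep_1-\ep_2\to 0$, but it enters only the \emph{qualitative} hypothesis of \cref{prop:mon} (mere finiteness of the higher norm). The \emph{quantitative} output of \cref{prop:mon} is $\|\varrho^t\|_{\G_{\phi^t+\ep_2}^{\sigma_0,r}}\leq \|\varrho^0\|_{\G_{\al+\ep_2}^{\sigma_0,r}}\leq \|\varrho^0\|_{\G_{\al+\ep}^{\sigma_0,r}}$, with no trace of $\ep_1-\ep_2$. Since $\sigma\leq\sigma_0$, this controls $\|\varrho^{t_0}\|_{\G_{\phi^{t_0}+\ep_2}^{\sigma,r}}$ uniformly, and \cref{rem:TextLB} then yields the uniform extension time.
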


\medskip
\begin{proof}[Proof of \cref{thm:main}]
Fix $0<\ep'<\ep$, and let $\sigma,\sigma_0$ be as above. If $T_{\max, \sigma, \ep'}<\infty$, then let $n\in\N$ be such that $n C (|\M|\|\varrho^0\|_{\G_{\al+\ep}^{\sigma_0,r}})^{-\frac{2s}{2s-\sigma s-1}}$ satisfies the inequality
\begin{equation}
n C(|\M|\|\varrho^0\|_{\G_{\al+\ep}^{\sigma_0,r}})^{-\frac{2s}{2s-\sigma s-1}} > T_{\max,\sigma,\ep'}-T_{\max,\sigma,\ep},
\end{equation}
where $C$ is the same constant as in the inequality \eqref{eq:Tmaxeplb}. We observe from \cref{lem:lspan} that
\begin{align}
T_{\max,\sigma,\ep'} - T_{\max,\sigma,\ep} &= \sum_{j=0}^{n-1} \paren*{T_{\max, \sigma,\ep - \frac{(j+1)(\ep-\ep')}{n}} - T_{\max,\sigma,\ep-\frac{j(\ep-\ep')}{n}}} \nn\\
&\geq \sum_{j=0}^{n-1} C(|\M|\|\varrho^0\|_{\G_{\al+\ep}^{\sigma_0,r}})^{-\frac{2s}{2s-\sigma s-1}}\nn\\
&> T_{\max,\sigma,\ep'}-T_{\max,\sigma,\ep},
\end{align}
which is a contradiction. Thus, $T_{\max,\sigma,\ep'}=\infty$.

For any $0<\ep'<\ep$ and any $0<\sigma<\frac{2s-1}{s}$ sufficiently large depending on $d,\ga,s,r$, it therefore holds that $\|\varrho\|_{C_T^0\G_{\phi+\ep'}^{\sigma,r}}<\infty$ for all $T>0$. Using the arbitrariness of $\ep'$, \cref{lem:Gemb} implies that for any $T>0$, $\|\varrho\|_{C_T^0\G_{\phi+\ep'}^{\sigma_0+\frac{2}{r},r}} < \infty$. Using that
\begin{equation}
\|\varrho^0\|_{\G_{\al+\ep'}^{\sigma_0,r}} \leq \|\varrho^0\|_{\G_{\al+\ep}^{\sigma_0,r}} < \frac{\zeta}{C_{exp}|\M|}
\end{equation}
by assumption \eqref{eq:sigma0tz}, we can apply \cref{prop:mon} on the interval $[0,T]$ to obtain that
\begin{align}\label{eq:exp0T}
\forall t\in [0,T], \qquad \|\varrho^t\|_{\G_{\phi^t+\ep'}^{\sigma_0,r}} \leq  e^{-\frac{\zeta t}{2}}\|\varrho^0\|_{\G_{\al+\ep'}^{\sigma_0,r}} .
\end{align}
Since $T>0$ was arbitrary, the decay \eqref{eq:exp0T}, in fact, holds on $[0,\infty)$.

Finally, we can replace $\ep'$ in both sides of \eqref{eq:exp0T} by the larger $\ep$. Indeed, the result of the preceding paragraph and the trivial inequality $\|\cdot\|_{\G_{\al+\ep'}^{\sigma_0,r}} \leq \|\cdot\|_{\G_{\al+\ep}^{\sigma_0,r}}$, for $\ep'\leq\ep$, give
\begin{equation}
\forall t\geq 0, \qquad \|\varrho^t\|_{\G_{\phi^t+\ep'}^{\sigma_0,r}} \leq  e^{-\frac{\zeta t}{2}}\|\varrho^0\|_{\G_{\al+\ep'}^{\sigma_0,r}} \leq e^{-\frac{\zeta t}{2}}\|\varrho^0\|_{\G_{\al+\ep}^{\sigma_0,r}} < \infty.
\end{equation}
The desired conclusion now follows by unpacking the definition of the left-hand side and appealing to the monotone convergence theorem.

\end{proof}

\bibliographystyle{alpha}
\bibliography{../MASTER}

\newcommand{\etalchar}[1]{$^{#1}$}
\begin{thebibliography}{FFGHR17}

\bibitem[AMS11]{AMS2011}
Luigi Ambrosio, Edoardo Mainini, and Sylvia Serfaty.
\newblock Gradient flow of the {C}hapman-{R}ubinstein-{S}chatzman model for
  signed vortices.
\newblock {\em Ann. Inst. H. Poincar\'{e} Anal. Non Lin\'{e}aire},
  28(2):217--246, 2011.

\bibitem[AS08]{AS2008}
Luigi Ambrosio and Sylvia Serfaty.
\newblock A gradient flow approach to an evolution problem arising in
  superconductivity.
\newblock {\em Comm. Pure Appl. Math.}, 61(11):1495--1539, 2008.

\bibitem[BFGM19]{BFGM2019}
Lisa Beck, Franco Flandoli, Massimiliano Gubinelli, and Mario Maurelli.
\newblock Stochastic {ODE}s and stochastic linear {PDE}s with critical drift:
  regularity, duality and uniqueness.
\newblock {\em Electron. J. Probab.}, 24:Paper No. 136, 72, 2019.

\bibitem[BIK15]{BIK2015}
Piotr Biler, Cyril Imbert, and Grzegorz Karch.
\newblock The nonlocal porous medium equation: {B}arenblatt profiles and other
  weak solutions.
\newblock {\em Arch. Ration. Mech. Anal.}, 215(2):497--529, 2015.

\bibitem[BKL01]{BKL2001}
J.~Bricmont, A.~Kupiainen, and R.~Lefevere.
\newblock Ergodicity of the 2{D} {N}avier-{S}tokes equations with random
  forcing.
\newblock volume 224, pages 65--81. 2001.
\newblock Dedicated to Joel L. Lebowitz.

\bibitem[BKM10]{BKM2010}
Piotr Biler, Grzegorz Karch, and R\'{e}gis Monneau.
\newblock Nonlinear diffusion of dislocation density and self-similar
  solutions.
\newblock {\em Comm. Math. Phys.}, 294(1):145--168, 2010.

\bibitem[BLL12]{BLL2012}
Andrea~L. Bertozzi, Thomas Laurent, and Flavien L\'{e}ger.
\newblock Aggregation and spreading via the {N}ewtonian potential: the dynamics
  of patch solutions.
\newblock {\em Math. Models Methods Appl. Sci.}, 22(suppl. 1):1140005, 39,
  2012.

\bibitem[BLR11]{BLR2011}
Andrea~L. Bertozzi, Thomas Laurent, and Jes\'{u}s Rosado.
\newblock {$L^p$} theory for the multidimensional aggregation equation.
\newblock {\em Comm. Pure Appl. Math.}, 64(1):45--83, 2011.

\bibitem[BM15]{BM2015id}
Jacob Bedrossian and Nader Masmoudi.
\newblock Inviscid damping and the asymptotic stability of planar shear flows
  in the 2{D} {E}uler equations.
\newblock {\em Publ. Math. Inst. Hautes \'{E}tudes Sci.}, 122:195--300, 2015.

\bibitem[BMX23]{BMX2023}
Marco Bagnara, Mario Maurelli, and Fanhui Xu.
\newblock No blow-up by nonlinear {It\^{o}} noise for the {Euler} equations.
\newblock {\em arXiv preprint arXiv:2305.09852}, 2023.

\bibitem[BNSW20]{BNSW2020}
Tristan Buckmaster, Andrea Nahmod, Gigliola Staffilani, and Klaus Widmayer.
\newblock The surface quasi-geostrophic equation with random diffusion.
\newblock {\em Int. Math. Res. Not. IMRN}, (23):9370--9385, 2020.

\bibitem[BvCK20]{BCCK2020}
Tristan Buckmaster, Sun\v{c}ica \v{C}ani\'{c}, Peter Constantin, and
  Alexander~A. Kiselev.
\newblock {\em Progress in mathematical fluid dynamics}, volume 2272 of {\em
  Lecture Notes in Mathematics}.
\newblock Springer, Cham; Centro Internazionale Matematico Estivo (C.I.M.E.),
  Florence, [2020] \copyright 2020.
\newblock Fondazione CIME/CIME Foundation Subseries.

\bibitem[CCC{\etalchar{+}}12]{CCCGW2012}
Dongho Chae, Peter Constantin, Diego C\'{o}rdoba, Francisco Gancedo, and
  Jiahong Wu.
\newblock Generalized surface quasi-geostrophic equations with singular
  velocities.
\newblock {\em Comm. Pure Appl. Math.}, 65(8):1037--1066, 2012.

\bibitem[CCGS20]{CCG2020}
Angel Castro, Diego C\'{o}rdoba, and Javier G\'{o}mez-Serrano.
\newblock Global smooth solutions for the inviscid {SQG} equation.
\newblock {\em Mem. Amer. Math. Soc.}, 266(1292):v+89, 2020.

\bibitem[CCH14]{CCH2014}
Jos\'{e}~Antonio Carrillo, Young-Pil Choi, and Maxime Hauray.
\newblock The derivation of swarming models: mean-field limit and {W}asserstein
  distances.
\newblock In {\em Collective dynamics from bacteria to crowds}, volume 553 of
  {\em CISM Courses and Lect.}, pages 1--46. Springer, Vienna, 2014.

\bibitem[CCZ21]{CCZ2021}
\'{A}ngel Castro, Diego C\'{o}rdoba, and Fan Zheng.
\newblock The lifespan of classical solutions for the inviscid surface
  quasi-geostrophic equation.
\newblock {\em Ann. Inst. H. Poincar\'{e} C Anal. Non Lin\'{e}aire},
  38(5):1583--1603, 2021.

\bibitem[CF02]{CF2002}
Diego Cordoba and Charles Fefferman.
\newblock Growth of solutions for {QG} and 2{D} {E}uler equations.
\newblock {\em J. Amer. Math. Soc.}, 15(3):665--670, 2002.

\bibitem[CG15]{CG2015}
K.~Chouk and M.~Gubinelli.
\newblock Nonlinear {PDE}s with modulated dispersion {I}: {N}onlinear
  {S}chr\"{o}dinger equations.
\newblock {\em Comm. Partial Differential Equations}, 40(11):2047--2081, 2015.

\bibitem[CGHV14]{CGhV2014}
Peter Constantin, Nathan Glatt-Holtz, and Vlad Vicol.
\newblock Unique ergodicity for fractionally dissipated, stochastically forced
  2{D} {E}uler equations.
\newblock {\em Comm. Math. Phys.}, 330(2):819--857, 2014.

\bibitem[CGSI19]{CGI2019}
Diego C\'{o}rdoba, Javier G\'{o}mez-Serrano, and Alexandru~D. Ionescu.
\newblock Global solutions for the generalized {SQG} patch equation.
\newblock {\em Arch. Ration. Mech. Anal.}, 233(3):1211--1251, 2019.

\bibitem[CHSV15]{CHSV2015}
J.~A. Carrillo, Y.~Huang, M.~C. Santos, and J.~L. V\'{a}zquez.
\newblock Exponential convergence towards stationary states for the 1{D} porous
  medium equation with fractional pressure.
\newblock {\em J. Differential Equations}, 258(3):736--763, 2015.

\bibitem[CJ21]{CJ2021}
Young-Pil Choi and In-Jee Jeong.
\newblock Classical solutions for fractional porous medium flow.
\newblock {\em Nonlinear Anal.}, 210:Paper No. 112393, 13, 2021.

\bibitem[CMT94]{CMT1994}
Peter Constantin, Andrew~J Majda, and Esteban Tabak.
\newblock Formation of strong fronts in the {2-D} quasigeostrophic thermal
  active scalar.
\newblock {\em Nonlinearity}, 7(6):1495, 1994.

\bibitem[CSV13]{CSV2013}
Luis Caffarelli, Fernando Soria, and Juan~Luis V\'{a}zquez.
\newblock Regularity of solutions of the fractional porous medium flow.
\newblock {\em J. Eur. Math. Soc. (JEMS)}, 15(5):1701--1746, 2013.

\bibitem[CV10]{CV2010}
Luis Caffarelli and Alexis Vasseur.
\newblock Drift diffusion equations with fractional diffusion and the
  quasi-geostrophic equation.
\newblock {\em Annals of Mathematics}, 171(3):1903--1930, 2010.

\bibitem[CV11a]{CV2011}
Luis Caffarelli and Juan~Luis Vazquez.
\newblock Nonlinear porous medium flow with fractional potential pressure.
\newblock {\em Arch. Ration. Mech. Anal.}, 202(2):537--565, 2011.

\bibitem[CV11b]{CV2011asy}
Luis~A. Caffarelli and Juan~Luis V\'{a}zquez.
\newblock Asymptotic behaviour of a porous medium equation with fractional
  diffusion.
\newblock {\em Discrete Contin. Dyn. Syst.}, 29(4):1393--1404, 2011.

\bibitem[CV12]{CV2012nmp}
Peter Constantin and Vlad Vicol.
\newblock Nonlinear maximum principles for dissipative linear nonlocal
  operators and applications.
\newblock {\em Geometric and Functional Analysis}, 22(5):1289--1321, Oct 2012.

\bibitem[CV15]{CV2015}
L.~Caffarelli and J.~L. V\'{a}zquez.
\newblock Regularity of solutions of the fractional porous medium flow with
  exponent 1/2.
\newblock {\em Algebra i Analiz}, 27(3):125--156, 2015.

\bibitem[CW99]{CW1999QG}
P.~Constantin and J.~Wu.
\newblock Behavior of solutions of {2D} quasi-geostrophic equations.
\newblock {\em SIAM Journal on Mathematical Analysis}, 30(5):937--948, 1999.

\bibitem[dBD02]{dBD2002}
A.~de~Bouard and A.~Debussche.
\newblock Finite-time blow-up in the additive supercritical stochastic
  nonlinear {S}chr\"{o}dinger equation: the real noise case.
\newblock In {\em The legacy of the inverse scattering transform in applied
  mathematics ({S}outh {H}adley, {MA}, 2001)}, volume 301 of {\em Contemp.
  Math.}, pages 183--194. Amer. Math. Soc., Providence, RI, 2002.

\bibitem[dBD05]{dBD2005}
Anne de~Bouard and Arnaud Debussche.
\newblock Blow-up for the stochastic nonlinear {S}chr\"{o}dinger equation with
  multiplicative noise.
\newblock {\em Ann. Probab.}, 33(3):1078--1110, 2005.

\bibitem[DD22]{DD2022}
Michele Dolce and Theodore~D. Drivas.
\newblock On maximally mixed equilibria of two-dimensional perfect fluids.
\newblock {\em Arch. Ration. Mech. Anal.}, 246(2-3):735--770, 2022.

\bibitem[DE22]{DE2022}
Theodore~D Drivas and Tarek~M Elgindi.
\newblock Singularity formation in the incompressible {Euler} equation in
  finite and infinite time.
\newblock {\em arXiv preprint arXiv:2203.17221}, 2022.

\bibitem[DT11]{DT2011}
Arnaud Debussche and Yoshio Tsutsumi.
\newblock 1{D} quintic nonlinear {S}chr\"{o}dinger equation with white noise
  dispersion.
\newblock {\em J. Math. Pures Appl. (9)}, 96(4):363--376, 2011.

\bibitem[EMS01]{EMS2001}
Weinan E, J.~C. Mattingly, and Ya. Sinai.
\newblock Gibbsian dynamics and ergodicity for the stochastically forced
  {N}avier-{S}tokes equation.
\newblock volume 224, pages 83--106. 2001.
\newblock Dedicated to Joel L. Lebowitz.

\bibitem[FFGHR17]{FFGhR2017}
Juraj F\"{o}ldes, Susan Friedlander, Nathan Glatt-Holtz, and Geordie Richards.
\newblock Asymptotic analysis for randomly forced {MHD}.
\newblock {\em SIAM J. Math. Anal.}, 49(6):4440--4469, 2017.

\bibitem[FGL21]{FGL2021}
Franco Flandoli, Lucio Galeati, and Dejun Luo.
\newblock Delayed blow-up by transport noise.
\newblock {\em Comm. Partial Differential Equations}, 46(9):1757--1788, 2021.

\bibitem[FGP10]{FGP2010}
F.~Flandoli, M.~Gubinelli, and E.~Priola.
\newblock Well-posedness of the transport equation by stochastic perturbation.
\newblock {\em Invent. Math.}, 180(1):1--53, 2010.

\bibitem[FGP11]{FGP2011}
F.~Flandoli, M.~Gubinelli, and E.~Priola.
\newblock Full well-posedness of point vortex dynamics corresponding to
  stochastic 2{D} {E}uler equations.
\newblock {\em Stochastic Process. Appl.}, 121(7):1445--1463, 2011.

\bibitem[FL21]{FL2021}
Franco Flandoli and Dejun Luo.
\newblock High mode transport noise improves vorticity blow-up control in 3{D}
  {N}avier-{S}tokes equations.
\newblock {\em Probab. Theory Related Fields}, 180(1-2):309--363, 2021.

\bibitem[Fla11]{Flandoli2011}
Franco Flandoli.
\newblock {\em Random perturbation of {PDE}s and fluid dynamic models}, volume
  2015 of {\em Lecture Notes in Mathematics}.
\newblock Springer, Heidelberg, 2011.
\newblock Lectures from the 40th Probability Summer School held in Saint-Flour,
  2010, \'{E}cole d'\'{E}t\'{e} de Probabilit\'{e}s de Saint-Flour.
  [Saint-Flour Probability Summer School].

\bibitem[FM95]{FM1995}
Franco Flandoli and Bohdan Maslowski.
\newblock Ergodicity of the {$2$}-{D} {N}avier-{S}tokes equation under random
  perturbations.
\newblock {\em Comm. Math. Phys.}, 172(1):119--141, 1995.

\bibitem[FT89]{FT1989}
C.~Foias and R.~Temam.
\newblock Gevrey class regularity for the solutions of the {N}avier-{S}tokes
  equations.
\newblock {\em J. Funct. Anal.}, 87(2):359--369, 1989.

\bibitem[Gan08]{Gancedo2008}
Francisco Gancedo.
\newblock Existence for the {$\alpha$}-patch model and the {QG} sharp front in
  {S}obolev spaces.
\newblock {\em Adv. Math.}, 217(6):2569--2598, 2008.

\bibitem[GG19]{GG2019}
Paul Gassiat and Benjamin Gess.
\newblock Regularization by noise for stochastic {H}amilton-{J}acobi equations.
\newblock {\em Probab. Theory Related Fields}, 173(3-4):1063--1098, 2019.

\bibitem[GHV14]{GhV2014}
Nathan~E. Glatt-Holtz and Vlad~C. Vicol.
\newblock Local and global existence of smooth solutions for the stochastic
  {E}uler equations with multiplicative noise.
\newblock {\em Ann. Probab.}, 42(1):80--145, 2014.

\bibitem[GP21]{GP2021gsqg}
Francisco Gancedo and Neel Patel.
\newblock On the local existence and blow-up for generalized {SQG} patches.
\newblock {\em Ann. PDE}, 7(1):Paper No. 4, 63, 2021.

\bibitem[GW05]{GW2005}
Thierry Gallay and C.~Eugene Wayne.
\newblock Global stability of vortex solutions of the two-dimensional
  {N}avier-{S}tokes equation.
\newblock {\em Comm. Math. Phys.}, 255(1):97--129, 2005.

\bibitem[H\"33]{Holder1933}
Ernst H\"{o}lder.
\newblock \"{U}ber die unbeschr\"{a}nkte {F}ortsetzbarkeit einer stetigen
  ebenen {B}ewegung in einer unbegrenzten inkompressiblen {F}l\"{u}ssigkeit.
\newblock {\em Math. Z.}, 37(1):727--738, 1933.

\bibitem[HK21]{HK2021}
Siming He and Alexander Kiselev.
\newblock Small-scale creation for solutions of the {SQG} equation.
\newblock {\em Duke Math. J.}, 170(5):1027--1041, 2021.

\bibitem[HM06]{HM2006}
Martin Hairer and Jonathan~C. Mattingly.
\newblock Ergodicity of the 2{D} {N}avier-{S}tokes equations with degenerate
  stochastic forcing.
\newblock {\em Ann. of Math. (2)}, 164(3):993--1032, 2006.

\bibitem[HPGS95]{HPGS1995}
Isaac~M. Held, Raymond~T. Pierrehumbert, Stephen~T. Garner, and Kyle~L.
  Swanson.
\newblock Surface quasi-geostrophic dynamics.
\newblock {\em J. Fluid Mech.}, 282:1--20, 1995.

\bibitem[IJ20]{IJ2020}
Alexandru~D Ionescu and Hao Jia.
\newblock Nonlinear inviscid damping near monotonic shear flows.
\newblock {\em arXiv preprint arXiv:2001.03087}, 2020.

\bibitem[IJ22]{IJ2022}
Alexandru~D. Ionescu and Hao Jia.
\newblock Axi-symmetrization near point vortex solutions for the 2{D} {E}uler
  equation.
\newblock {\em Comm. Pure Appl. Math.}, 75(4):818--891, 2022.

\bibitem[KNV07]{KNV2007gwp}
A.~Kiselev, F.~Nazarov, and A.~Volberg.
\newblock Global well-posedness for the critical {2D} dissipative
  quasi-geostrophic equation.
\newblock {\em Inventiones mathematicae}, 167(3):445--453, Mar 2007.

\bibitem[KS91]{KS1991}
Ioannis Karatzas and Steven~E. Shreve.
\newblock {\em Brownian motion and stochastic calculus}, volume 113 of {\em
  Graduate Texts in Mathematics}.
\newblock Springer-Verlag, New York, second edition, 1991.

\bibitem[Kv14]{KS2014}
Alexander Kiselev and Vladimir \v{S}ver\'{a}k.
\newblock Small scale creation for solutions of the incompressible
  two-dimensional {E}uler equation.
\newblock {\em Ann. of Math. (2)}, 180(3):1205--1220, 2014.

\bibitem[LMS18]{LMS2018}
Stefano Lisini, Edoardo Mainini, and Antonio Segatti.
\newblock A gradient flow approach to the porous medium equation with
  fractional pressure.
\newblock {\em Arch. Ration. Mech. Anal.}, 227(2):567--606, 2018.

\bibitem[LZ00]{LZ2000}
Fanghua Lin and Ping Zhang.
\newblock On the hydrodynamic limit of {G}inzburg-{L}andau vortices.
\newblock {\em Discrete Contin. Dynam. Systems}, 6(1):121--142, 2000.

\bibitem[Mai12]{Mainini2012}
Edoardo Mainini.
\newblock Well-posedness for a mean field model of {G}inzburg-{L}andau vortices
  with opposite degrees.
\newblock {\em NoDEA Nonlinear Differential Equations Appl.}, 19(2):133--158,
  2012.

\bibitem[Mat99]{Mattingly1999}
Jonathan~C. Mattingly.
\newblock Ergodicity of {$2$}{D} {N}avier-{S}tokes equations with random
  forcing and large viscosity.
\newblock {\em Comm. Math. Phys.}, 206(2):273--288, 1999.

\bibitem[MB02]{MB2002}
Andrew~J. Majda and Andrea~L. Bertozzi.
\newblock {\em Vorticity and incompressible flow}, volume~27 of {\em Cambridge
  Texts in Applied Mathematics}.
\newblock Cambridge University Press, Cambridge, 2002.

\bibitem[MP12]{MP2012book}
Carlo Marchioro and Mario Pulvirenti.
\newblock {\em Mathematical theory of incompressible nonviscous fluids},
  volume~96.
\newblock Springer Science \& Business Media, 2012.

\bibitem[MST21]{MST2021}
Oleksandr Misiats, Oleksandr Stanzhytskyi, and Ihsan Topaloglu.
\newblock On global existence and blowup of solutions of stochastic
  {Keller-Segel} type equation.
\newblock {\em arXiv preprint arXiv:2107.12419}, 2021.

\bibitem[MZ05]{MZ2005}
Nader Masmoudi and Ping Zhang.
\newblock Global solutions to vortex density equations arising from
  sup-conductivity.
\newblock {\em Ann. Inst. H. Poincar\'{e} Anal. Non Lin\'{e}aire},
  22(4):441--458, 2005.

\bibitem[MZ20]{MZ2020}
Nader Masmoudi and Weiren Zhao.
\newblock Nonlinear inviscid damping for a class of monotone shear flows in
  finite channel.
\newblock {\em arXiv preprint arXiv:2001.08564}, 2020.

\bibitem[PHS94]{PHS1994spec}
Raymond~T. Pierrehumbert, Isaac~M. Held, and Kyle~L. Swanson.
\newblock Spectra of local and nonlocal two-dimensional turbulence.
\newblock {\em Chaos, Solitons \& Fractals}, 4(6):1111 -- 1116, 1994.
\newblock Special Issue: Chaos Applied to Fluid Mixing.

\bibitem[Res92]{Resnick1992}
Sidney Resnick.
\newblock {\em Adventures in stochastic processes}.
\newblock Birkh\"{a}user Boston, Inc., Boston, MA, 1992.

\bibitem[RS23]{RS2023}
Matthew Rosenzweig and Gigliola Staffilani.
\newblock Global solutions of aggregation equations and other flows with random
  diffusion.
\newblock {\em Probab. Theory Related Fields}, 185(3-4):1219--1262, 2023.

\bibitem[Shn13]{Shnirelman2013}
Alexander Shnirelman.
\newblock On the long time behavior of fluid flows.
\newblock {\em Procedia IUTAM}, 7:151--160, 2013.
\newblock IUTAM Symposium on Topological Fluid Dynamics: Theory and
  Applications.

\bibitem[SV14]{SV2014}
Sylvia Serfaty and Juan~Luis V\'{a}zquez.
\newblock A mean field equation as limit of nonlinear diffusions with
  fractional {L}aplacian operators.
\newblock {\em Calc. Var. Partial Differential Equations}, 49(3-4):1091--1120,
  2014.

\bibitem[Vil04]{Villani2004}
C\'{e}dric Villani.
\newblock Trend to equilibrium for dissipative equations, functional
  inequalities and mass transportation.
\newblock In {\em Recent advances in the theory and applications of mass
  transport}, volume 353 of {\em Contemp. Math.}, pages 95--109. Amer. Math.
  Soc., Providence, RI, 2004.

\bibitem[Wol33]{Wolibner1933}
W.~Wolibner.
\newblock Un theor\`eme sur l'existence du mouvement plan d'un fluide parfait,
  homog\`ene, incompressible, pendant un temps infiniment long.
\newblock {\em Math. Z.}, 37(1):698--726, 1933.

\bibitem[Yud63]{Yudovich1963}
V.I. Yudovich.
\newblock Non-stationary flow of an ideal incompressible liquid.
\newblock {\em USSR Computational Mathematics and Mathematical Physics},
  3(6):1407 -- 1456, 1963.

\end{thebibliography}
\end{document}